\providecommand{\U}[1]{\protect\rule{.1in}{.1in}}
\providecommand{\U}[1]{\protect\rule{.1in}{.1in}}
\providecommand{\U}[1]{\protect\rule{.1in}{.1in}}
\newtheorem{theorem}{Theorem}[section]
\newtheorem{corollary}[theorem]{Corollary}
\newtheorem{remark}[theorem]{Remark}
\theoremstyle{definition}
\begin{document}
\title{Spherical Fourier multipliers related to Gelfand pairs}
\author{ Yaogan Mensah and Marie Fran\c coise Ouedraogo}
\address{Department of Mathematics, University of Lom\'e, 01 PO Box 1515, Lom\'e 1, Togo}
\email{\textcolor[rgb]{0.00,0.00,0.84}{mensahyaogan2@gmail.com, ymensah@univ-lome.org}}
\address{Department of Mathematics, University Joseph Ki-Zerbo, 03 PO Box 7021, Ouagadougou 03, Burkina Faso}
\email{\textcolor[rgb]{0.00,0.00,0.84}{omfrancoise@yahoo.fr, francoise.ouedraogo@ujkz.bf}}

\begin{abstract}
In this paper,  we introduce a family of Fourier multipliers using the spherical Fourier transform on Gelfand pairs. We refer to them  as spherical Fourier multipliers. We study certain sufficient conditions under which they are bounded. Then, under the hypothesis of compactness of the underlying group and under certain summability conditions, we obtain the belonging of the spherical Fourier multipliers to some Schatten-von Neumann classes.
\end{abstract}
\maketitle

Keywords and phrases : Gelfand pair, spherical Fourier transform, spherical Fourier multiplier, Schatten-von Neumann class, interpolation. 
\newline
2020 Mathematics Subject Classification : 43A90,46E35, 43A77, 43A32, 47B10, 46E40.

%%%%%%%%%%%%%%%%%%%%%%%%%%%%%%%%%%%%%%%%%%%%%%%%%%%%%%%%%%%%%%%%%%
\section{Introduction}
%%%%%%%%%%%%%%%%%%%%%%%%%%%%%%%%%%%%%%%%%%%%%%%%%%%%%%%%%%%%%%%%%
Like the Fourier transformation and other transformations related to it, Fourier multipliers are present and play a fundamental role in classical harmonic analysis \cite{Grafakos} or abstract harmonic analysis \cite{Larsen}. They are a  class of  operators defined from the Fourier transform and they are mathematical tools that appear for instance in signal analysis and Partial Differential equations. Fourier multipliers are closely linked to localization operators \cite{Wong}.

The most important aspect that scholars study on these operators is their boundedness. Several authors have studied this problem in various situations. We can cite among others \cite{Anker, Hormander,Ruzhansky, Soltani}.  
Recent work related to Fourier multipliers  are \cite{Blasco, Catana,Kumar} to name just a few. They can be defined from the Fourier transform related to classical groups such as $\mathbb{R}^n$, the $n$-torus $\mathbb{T}^n$, or from more abstract groups such as locally compact abelian groups or locally compact nonabelian groups.  

 The theory of Gelfand pair first appeared in the Gelfand paper \cite{Gelfand}. This theory makes it possible to construct a Fourier transformation (called a spherical Fourier transformation) from a locally compact group and one of its compact subgroups chosen in such a way that the space of integrable functions on the group is commutative under the convolution product.

 In this paper, we study Fourier multipliers related to the  spherical Fourier transform on Gelfand pairs. In this context, we call them {\it spherical Fourier multipliers}. As results, we obtain sufficient conditions of continuity of the spherical Fourier multipliers. Moreover, under the condition of compactness, we obtain results related to boundedness and  belonging to Schatten-von Neumann spaces similar to localization operators in time-frequency analysis.

The rest of the paper is organized as follows. In Section \ref{Preliminaries}, we summarize the mathematical tools necessary for an independent understanding of the article. We essentially recall notions linked to the convolution product, spherical harmonic analysis, Schatten-von Neumann spaces and interpolation between  spaces of functions or operators. In Section \ref{Fourier multiplier operators}, we study various properties of spherical Fourier multipliers on Gelfand pairs, particularly in terms of boundedness. In Section \ref{The spherical Fourier multiplier operators and the Schatten-von Neumann classes}, under the assumption that the group is compact, we proved sufficient conditions under which spherical Fourier multipliers belong to Schatten-von Neumann classes.

%%%%%%%%%%%%%%%%%%%%%%%%%%%%%%%%%%%%%%%%%%%%%%%%%%%%%%%%%%%%%%%%%
\section{Preliminaries}\label{Preliminaries}

\subsection{Convolution}\label{convolution}
  Our references for this subsection are \cite{Folland, Hewitt}.
  Let $G$ be a locally compact and Hausdorff group with neutral element $e$ and with  a fixed left Haar measure.  We denote by $\mathcal{C}_c(G)$ the space  of complex valued continuous functions on $G$ with compact support.  There exists a homomorphism $\Delta : G\longrightarrow (0,\infty)$ such that for each $f\in \mathcal{C}_c(G)$, 
$$\displaystyle\int_G f(xy^{-1})dx=\Delta (y)\int_G f(x)dx.$$
If for all $y\in G,\,\Delta(y)=1$, then the group $G$ is said to be unimodular. 

 The Lebesgue spaces on $G$ are denoted by  $L^p(G)$,  $1\le p \le \infty$. These spaces are endowed  with the following norms under which they are Banach spaces : 
$$\|f\|_{L^p(G)}=\left( \int_G |f(x)|^pdx\right)^{\frac{1}{p}}, \, 1\le p <\infty$$
and 
$$\|f\|_{L^\infty(G)}=\mbox{sup ess}|f|.$$
  
 If $f,g\in L^1(G)$,  the convolution product of $f$ and $g$ is defined by
$$(f\ast g) (x)=\int_G f(y)g(y^{-1}x)dy.$$
The convolution product is commutative, that is $f\ast g= g\ast f,\,\forall f,g\in L^1(G)$,  if and only if, the group $G$ is   commutative. A Banach algebra structure is constructed on $L^1(G)$ with respect to the convolution product. More precisely, we have
$$\forall f,g\in L^1(G),\,\|f\ast g\|_{L^1(G)}\le\|f\|_{L^1(G)}\|g\|_{L^1(G)}.$$
The convolution product is extended to functions in $L^p(G)$. The following facts about the convolution  product in link with the $L^p$-spaces will be useful in the sequel : 
\begin{itemize}
\item Let $1\le p\le \infty$. If $f\in L^1(G)$ and $g\in L^p(G)$, then $f\ast g\in L^p(G)$ and $$\|f\ast g\|_{L^p(G)}\le \|f\|_{L^1(G)}\|g\|_{L^p(G)}.$$
\item Suppose $G$ is unimodular.  Let $1<p,q<\infty$ with $\displaystyle\frac{1}{p}+\displaystyle\frac{1}{q}=1$. If $f\in L^p(G)$ and $g\in L^q (G)$, then $f\ast g\in \mathcal{C}_0 (G)$ and $$\|f\ast g\|_\infty\le \|f\|_{L^p(G)}\|g\|_{L^q(G)},$$ where $\mathcal{C}_0 (G)$ is the space of complex  continuous functions that tend to zero at infinity. 
\item Suppose $G$ is unimodular. Let $1<p,q,r<\infty$ with $\displaystyle\frac{1}{p}+\displaystyle\frac{1}{q}=\frac{1}{r}+1$. If $f\in L^p(G)$ and $g\in L^q (G)$, then $f\ast g\in  L^r(G)$ and $$\|f\ast g\|_{L^r(G)}\le \|f\|_{L^p(G)}\|g\|_{L^q(G)}.$$.
\end{itemize}
 
 %%%%%%%%%%%%%%%%%%%%%%%%%%%%%%%%%%%%%%%%%%%%%%%%%%%%%%%%%%%%%%%%%
\subsection{Harmonic analysis on Gelfand pairs}
Our  references for this subsection are \cite{Dijk,Wolf}.  
Let $G$ be a locally compact and Hausdorff group. Let $K$ be a compact subgroup of $G$. 

A function $f: G\longrightarrow \mathbb{C}$ is said to be $K$-bi-invariant if  $\forall\, k_1,k_2\in K,\,\forall x\in G$,
$$f(k_1xk_2)=f(x)$$
 We denote by $\mathcal{C}_c^\natural(G)$ the space of complex valued continuous $K$-bi-invariant functions on $G$ with compact support and by $L^{1,\natural}(G)$  
 the space of complex valued integrable functions which are $K$-bi-invariant on $G$. More generally,  $L^{p,\natural}(G), 1\le p\le \infty$,  will denote
 the space of complex valued $p$-integrable functions which are $K$-bi-invariant on $G$.
 
  By definition,  $(G,K)$ is called a Gelfand pair if $L^{1,\natural}(G)$ is a commutative  convolution algebra \cite[page 153]{Wolf}. Since $\mathcal{C}_c^\natural(G)$ is dense in $L^{1,\natural}(G)$, then it is equivalent to say that $(G,K)$ is  a Gelfand pair if and only if $\mathcal{C}_c^\natural(G)$  is commutative under the convolution product. 
 It is well-known that  if $(G,K)$ is a Gelfand pair, then $G$  is unimodular \cite[page 154]{Wolf}.
 
 Let $(G,K)$ be a Gelfand pair. A function $\chi :  \mathcal{C}_c^\natural(G)\longrightarrow\mathbb{C}$ is called a character if 
 $$\forall  f,g\in \mathcal{C}_c^\natural(G),\,\chi (f\ast g)= \chi (f)\chi (g).$$
 Let $\varphi$ be a $K$-bi-invariant function on $G$. Then, one says that $\varphi$ is a  spherical function if   the function $\chi_\varphi : \mathcal{C}_c^\natural(G)\longrightarrow\mathbb{C}, f\longmapsto \displaystyle\int_G f(x)\varphi (x)dx$ is a nontrivial character. 
 
  The following theorem gives characterizations of spherical functions.
 \begin{theorem}(\cite[page 77]{Dijk}, \cite[page 157]{Wolf})\label{properties spherical functions}
 
 The following assertions are equivalent.
 \begin{enumerate}
 \item The function $\varphi$ is a spherical function. 
 \item  The function $\varphi$ is $K$-bi-invariant with $\varphi (e)=1$  and such that for each $f\in \mathcal{C}_c^\natural(G)$, there exists a complex number $\lambda_f$ such that $f\ast \varphi=\lambda_f\varphi$.
 \item  The function $\varphi$ is continuous, not identically zero and if $x,y\in G$, then $$\displaystyle\int_K\varphi (xky)dk=\varphi (x)\varphi (y).$$ 
 \end{enumerate}
 \end{theorem}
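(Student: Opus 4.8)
The plan is to establish the cyclic chain of implications $(1)\Rightarrow(2)\Rightarrow(3)\Rightarrow(1)$, working under the (harmless) standing assumption that $\varphi$ is locally integrable, so that $\chi_\varphi$ and all the convolutions below are meaningful; continuity of $\varphi$ will be recovered along the way. Since $(G,K)$ is a Gelfand pair, $G$ is unimodular, and the recurring computational device is the substitution $y\mapsto k_1yk_2$ with $k_1,k_2\in K$, which preserves the Haar measure; averaging a relation over $k_1,k_2\in K$ replaces a function by its bi-invariantization $g^{\natural}(y)=\int_K\int_K g(k_1yk_2)\,dk_1\,dk_2\in\mathcal{C}_c^{\natural}(G)$ without changing its integral against any $K$-bi-invariant function. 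I will also use $K$-bi-invariant approximate identities $(f_\alpha)$ obtained by applying $(\cdot)^{\natural}$ to ordinary shrinking bump functions at $e$; these concentrate on $K$, and for them $f_\alpha\ast\varphi\to\varphi$ by left $K$-invariance of $\varphi$.

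For $(1)\Rightarrow(2)$: expanding the convolution in $\chi_\varphi(f\ast g)=\chi_\varphi(f)\chi_\varphi(g)$, applying Fubini, and substituting $x\mapsto yx$, one rewrites $\int_G(f\ast g)(x)\varphi(x)\,dx$ as $\int_G g(x)\bigl[\int_G f(y)\varphi(yx)\,dy\bigr]dx$. Comparing this with $\chi_\varphi(f)\int_G g(x)\varphi(x)\,dx$ and letting $g$ range over $\mathcal{C}_c^{\natural}(G)$ forces the identity $\int_G f(y)\varphi(yx)\,dy=\chi_\varphi(f)\,\varphi(x)$, where the left side is a continuous $K$-bi-invariant function of $x$; since $\chi_\varphi$ is nontrivial one can pick $f$ with $\chi_\varphi(f)\neq0$, which exhibits $\varphi$ as a scalar multiple of this continuous function, so $\varphi$ is (a.e. equal to) a continuous function and the identity holds pointwise. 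Replacing $f$ by $\check f(y)=f(y^{-1})\in\mathcal{C}_c^{\natural}(G)$ and changing variables $y\mapsto y^{-1}$ (unimodularity) turns it into $f\ast\varphi=\chi_\varphi(\check f)\,\varphi$, so one may take $\lambda_f=\chi_\varphi(\check f)$; finally, evaluating $f_\alpha\ast\varphi=\chi_\varphi(\check f_\alpha)\varphi$ in the limit along an approximate identity gives $\varphi=\varphi(e)\varphi$, and $\varphi\not\equiv0$ yields $\varphi(e)=1$.

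For $(2)\Rightarrow(3)$: evaluating $f\ast\varphi=\lambda_f\varphi$ at $e$ and using $\varphi(e)=1$ shows $\lambda_f=\int_G f(y)\varphi(y^{-1})\,dy$, so that
$$\int_G f(y)\varphi(y^{-1}x)\,dy=\varphi(x)\int_G f(y)\varphi(y^{-1})\,dy\qquad(f\in\mathcal{C}_c^{\natural}(G)).$$
The decisive step is to apply this with $f=g^{\natural}$ for an arbitrary $g\in\mathcal{C}_c(G)$: on the right, $y\mapsto\varphi(y^{-1})$ is already $K$-bi-invariant, so that side collapses to $\varphi(x)\int_G g(y)\varphi(y^{-1})\,dy$; on the left, carrying out the two $K$-averages and using left $K$-invariance of $\varphi$ turns it into $\int_G g(y)\bigl[\int_K\varphi(y^{-1}kx)\,dk\bigr]dy$. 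Since $g$ is an arbitrary element of $\mathcal{C}_c(G)$ and (continuity of $\varphi$ being obtained exactly as in the previous step) both integrands are continuous, we get $\int_K\varphi(y^{-1}kx)\,dk=\varphi(x)\varphi(y^{-1})$; substituting $y\mapsto y^{-1}$ and relabelling gives the functional equation of (3), while $\varphi\not\equiv0$ is immediate from $\varphi(e)=1$. I expect extracting this central $K$-average to be the main obstacle, since the $K$-bi-invariance of $f$ interacts with only one side of $\varphi(y^{-1}x)$ at a time.

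For $(3)\Rightarrow(1)$: one first deduces that $\varphi$ is $K$-bi-invariant — inserting $ky$ for $y$ in $\int_K\varphi(xk'y)\,dk'$ and substituting $k'\mapsto k'k^{-1}$ gives $\varphi(x)\varphi(ky)=\varphi(x)\varphi(y)$, so $\varphi\not\equiv0$ forces left $K$-invariance, and symmetrically on the right — and that $\varphi(e)=1$: the case $x=y=e$ gives $\varphi(e)=\varphi(e)^2$, and $\varphi(e)=0$ is ruled out by the equation with $x=e$, which would otherwise force $\varphi\equiv0$. Then, expanding $\chi_\varphi(f\ast g)$ as in $(1)\Rightarrow(2)$ gives $\int_G\int_G f(y)g(x)\varphi(yx)\,dx\,dy$; averaging over a central $k\in K$ (legitimate because $g$ is $K$-bi-invariant) and invoking the functional equation replaces $\varphi(yx)$ by $\varphi(y)\varphi(x)$, whence $\chi_\varphi(f\ast g)=\chi_\varphi(f)\chi_\varphi(g)$. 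Finally $\chi_\varphi$ is nontrivial because $\int_G f_\alpha\varphi\to\varphi(e)=1$ along a $K$-bi-invariant approximate identity, so $\chi_\varphi$ does not vanish identically.
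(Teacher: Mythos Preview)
The paper does not supply a proof of this theorem: it is quoted verbatim from the references \cite[page~77]{Dijk} and \cite[page~157]{Wolf}, and the text moves on immediately afterwards. There is therefore no ``paper's own proof'' against which to compare your attempt.

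That said, your argument is essentially the standard one found in those references, and the cyclic chain $(1)\Rightarrow(2)\Rightarrow(3)\Rightarrow(1)$ is carried out correctly. The main computational points --- Fubini plus the substitution $x\mapsto yx$ to rewrite $\chi_\varphi(f\ast g)$, the bi-invariantization trick $f\mapsto f^\natural$ to pass from $\mathcal{C}_c^\natural(G)$ to $\mathcal{C}_c(G)$, and the insertion of a central $k\in K$ followed by averaging to invoke the functional equation --- are all handled properly. Two minor remarks: in $(2)\Rightarrow(3)$ you need $\lambda_f\neq 0$ for some $f$ in order to extract continuity of $\varphi$ from $\varphi=\lambda_f^{-1}(f\ast\varphi)$; this follows because $\lambda_f=\int_G f(y)\varphi(y^{-1})\,dy$ and the vanishing of all such integrals against $K$-bi-invariant test functions would force $\varphi=0$ a.e., contradicting $\varphi(e)=1$. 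Also, the approximate-identity arguments are slightly more delicate than you indicate, since $f_\alpha^\natural$ is supported on $KV_\alpha K$ rather than $V_\alpha$; the left $K$-invariance of $\varphi$ is exactly what makes $f_\alpha^\natural\ast\varphi\to\varphi$ go through, and you do flag this.
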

 
We denote by $\mathcal{S}^b(G,K)$ the set of bounded spherical functions for the  Gelfand pair $(G,K)$. However, by simplicity, we will write $\mathcal{S}^b$ for $\mathcal{S}^b(G,K)$.  
  For a function $f\in L^{1,\natural}(G)$, the spherical Fourier transform of $f$, denoted by $\mathcal{F}f$ or $\widehat{f}$, is defined by $$\mathcal{F}{f}(\varphi)=\displaystyle\int_Gf(x)\varphi (x^{-1})dx, \, \varphi\in \mathcal{S}^b.$$
The set  $\mathcal{S}^b$ inherits a weak topology from the family $\lbrace \widehat{f} : f\in L^{1,\natural}(G)\rbrace$ and  $\mathcal{S}^b$ is a locally compact and Hausdorff space under this topology \cite[page 185]{Wolf}.   
  
  In order to obtain the inversion formula, we recall the notion of positive definite function.
  
A function $\varphi: G\longrightarrow \mathbb{C}$ is said to be positive  definite if $\forall N\in \mathbb{N},\forall x_1,\cdots, x_N\in G, \forall z_1,\cdots,z_N\in \mathbb{C}$ the following inequality holds : 
$$\sum_{n=1}^N\sum_{m=1}^N\varphi (x_n^{-1}x_m)\overline{z_n}z_m\ge 0.$$
 The positive functions have the following properties.
\begin{theorem}\cite[page 165]{Wolf}. \label{Properties psd functions}

If $\varphi: G\longrightarrow \mathbb{C}$ is a positive definite function, then
\begin{enumerate}
\item $\forall x\in G,\, |\varphi(x)|\le \varphi(e)$.
\item $\forall x\in G,\,\varphi(x^{-1})=\overline{\varphi(x)}$.
\end{enumerate}
\end{theorem}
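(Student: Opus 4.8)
\emph{Proof idea.} The plan is to invoke the defining inequality of positive definiteness only for the smallest cases $N=1$ and $N=2$, with carefully chosen points $x_n$ and scalars $z_n$, and to read off both assertions from the resulting scalar inequalities. A guiding remark throughout is that although the double sum is a priori complex, the hypothesis ``$\ge 0$'' forces it to be a nonnegative real number for every admissible choice of data, and squeezing this reality for several test vectors is what produces the Hermitian symmetry.

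First I would take $N=1$ and $x_1=e$, which reduces the inequality to $\varphi(e)\,|z_1|^2\ge 0$ for all $z_1\in\mathbb{C}$; hence $\varphi(e)$ is a nonnegative real number. Next, fix $x\in G$ and apply the inequality with $N=2$, $x_1=e$, $x_2=x$. It becomes
\[
\varphi(e)\bigl(|z_1|^2+|z_2|^2\bigr)+\varphi(x)\,\overline{z_1}z_2+\varphi(x^{-1})\,\overline{z_2}z_1\ \ge\ 0
\]
for all $z_1,z_2\in\mathbb{C}$, so in particular the left-hand side is real for every such pair. Testing $(z_1,z_2)=(1,1)$ shows $\varphi(x)+\varphi(x^{-1})\in\mathbb{R}$, while testing $(z_1,z_2)=(1,i)$ shows that $i\bigl(\varphi(x)-\varphi(x^{-1})\bigr)\in\mathbb{R}$, i.e.\ $\varphi(x)-\varphi(x^{-1})$ is purely imaginary. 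Writing $\varphi(x)=a+bi$ and $\varphi(x^{-1})=c+di$, these two facts give $b+d=0$ and $a=c$, that is $\varphi(x^{-1})=\overline{\varphi(x)}$, which is assertion (2).

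Finally, substituting $\varphi(x^{-1})=\overline{\varphi(x)}$ back into the displayed inequality shows that the Hermitian matrix $\bigl(\begin{smallmatrix}\varphi(e)&\varphi(x)\\ \overline{\varphi(x)}&\varphi(e)\end{smallmatrix}\bigr)$ is positive semidefinite, hence has nonnegative determinant, so that $\varphi(e)^2-|\varphi(x)|^2\ge 0$; since $\varphi(e)\ge 0$ this yields $|\varphi(x)|\le\varphi(e)$, which is assertion (1). Concretely, and avoiding any matrix language, one may simply take $z_1=\varphi(e)$ and $z_2=-\overline{\varphi(x)}$ in the inequality and simplify, using $\varphi(x^{-1})=\overline{\varphi(x)}$, to arrive at $\varphi(e)\bigl(\varphi(e)^2-|\varphi(x)|^2\bigr)\ge 0$; the conclusion then holds in all cases, including $\varphi(e)=0$ (where it forces $\varphi\equiv 0$). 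The only point requiring a little care is thus the bookkeeping with the test vectors needed to extract simultaneously the reality of $\varphi(e)$ and the symmetry $\varphi(x^{-1})=\overline{\varphi(x)}$; once that symmetry is available, assertion (1) is the classical $2\times 2$ determinant estimate.
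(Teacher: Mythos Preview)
Your argument is correct and is the standard proof: extract $\varphi(e)\ge 0$ from $N=1$, then use $N=2$ with $x_1=e$, $x_2=x$ and a few test pairs $(z_1,z_2)$ to obtain the Hermitian symmetry $\varphi(x^{-1})=\overline{\varphi(x)}$, and finally read off the bound $|\varphi(x)|\le\varphi(e)$ from the nonnegativity of the associated $2\times 2$ determinant. The bookkeeping with the test vectors is clean, and the edge case $\varphi(e)=0$ is handled.

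Note, however, that the paper does not supply its own proof of this theorem: it is stated as a quotation from \cite[page 165]{Wolf} and used only as background. So there is no ``paper's proof'' to compare against; your argument is precisely the classical one that the cited reference gives.
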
 
We denote by $\mathcal{S}^+$ the set of positive definite spherical functions for the Gelfand pair $(G,K)$. 
From Theorem \ref{properties spherical functions}(2) and Theorem \ref{Properties psd functions}(1), it is clear that $\mathcal{S}^+$ is uniformly bounded in the sense that 
$$\forall \varphi\in \mathcal{S}^+,\forall x\in G, |\varphi(x)|\le \varphi(e)=1.$$
 
Let us denote by $B^\natural (G)$ the set of  linear combinations of positive definite and $K$-bi-invariant functions on $G$. In \cite[page 191]{Wolf}, it was proved that there is a positive Radon measure $\mu$ on $\mathcal{S}^+$ such that if $f\in B^\natural (G)\cap L^{1,\natural}(G)$,  then $\widehat{f}\in L^1(\mathcal{S}^+)$ and the following spherical  Fourier  inversion formula holds :
 $$\forall x\in G,\,f(x)=\displaystyle\int_{\mathcal{S}^+}\widehat{f}(\varphi)\varphi (x)d\mu (\varphi).$$

Let us recall  some properties of the spherical Fourier transform.

Let us denote by $\mathcal{C}_{0}(\mathcal{S}^b)$ the set of complex valued functions on $\mathcal{S}^b$ which vanish at $\infty$.
\begin{theorem}\cite[page 185]{Wolf}\label{Riemann-Lebesgue}

If $f\in L^{1,\natural}(G)$, then $\widehat{f}\in \mathcal{C}_{\infty}(\mathcal{S}^b)$ and 
$\|\widehat{f}\|_\infty\le \|f\|_{L^{1,\natural}(G)}.$
\end{theorem}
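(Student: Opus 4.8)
The plan is to regard $A:=L^{1,\natural}(G)$ as a commutative Banach algebra under convolution --- which it is, precisely because $(G,K)$ is a Gelfand pair --- and to recognise the spherical Fourier transform of $A$ as its Gelfand transform, so that the theorem reduces to the general fact that the Gelfand transform of an element of a commutative Banach algebra is a continuous function vanishing at infinity on the spectrum. The norm bound and the continuity of $\widehat f$ can be obtained by elementary means; the vanishing at infinity is the substantive part.

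For the norm estimate I would first record that every $\varphi\in\mathcal{S}^b$ satisfies $\sup_{x\in G}|\varphi(x)|=1$. Indeed, applying Theorem \ref{properties spherical functions}(3) with $y=x$ gives $\varphi(x)^2=\int_K\varphi(xkx)\,dk$, so that $|\varphi(x)|^2\le\sup_{z\in G}|\varphi(z)|$ for every $x\in G$; writing $M=\sup_{z\in G}|\varphi(z)|$ this yields $M^2\le M$, and since $M\ge|\varphi(e)|=1$ by Theorem \ref{properties spherical functions}(2) we conclude $M=1$. Hence, for $f\in L^{1,\natural}(G)$ and $\varphi\in\mathcal{S}^b$,
\[
|\widehat f(\varphi)|=\left|\int_G f(x)\varphi(x^{-1})\,dx\right|\le\int_G|f(x)|\,|\varphi(x^{-1})|\,dx\le\|f\|_{L^{1,\natural}(G)},
\]
which is the claimed inequality. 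Continuity of $\widehat f$ is then immediate: the topology on $\mathcal{S}^b$ was introduced as the coarsest one making every function $\widehat g$, $g\in L^{1,\natural}(G)$, continuous, so $\widehat f$ is continuous by construction; equivalently, this is the Gelfand (weak-$*$) topology under the correspondence described next.

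It remains to prove $\widehat f\in\mathcal{C}_\infty(\mathcal{S}^b)$. The key step is the identification of the Gelfand spectrum $\Delta(A)$ of $A=L^{1,\natural}(G)$ with $\mathcal{S}^b$: the assignment $\varphi\mapsto\bigl(f\mapsto\widehat f(\varphi)\bigr)$ is a bijection of $\mathcal{S}^b$ onto the set of nonzero multiplicative linear functionals on $A$, since every such functional is automatically continuous, is therefore given by integration against a bounded function that may be taken $K$-bi-invariant, and multiplicativity forces this function to be a bounded spherical function, while conversely every $\varphi\in\mathcal{S}^b$ produces such a functional. This bijection is moreover a homeomorphism for the weak topology on $\mathcal{S}^b$ and the Gelfand topology on $\Delta(A)$, and under it the spherical Fourier transform $\mathcal{F}$ becomes exactly the Gelfand transform of $A$. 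Since $A$ is a commutative Banach algebra, the general theory guarantees that $\Delta(A)$ is locally compact Hausdorff and that the Gelfand transform maps $A$ into the space of continuous functions on $\Delta(A)$ vanishing at infinity; transporting this through the identification yields $\widehat f\in\mathcal{C}_\infty(\mathcal{S}^b)$, which completes the proof.

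The main obstacle is the spectral identification $\Delta\bigl(L^{1,\natural}(G)\bigr)\cong\mathcal{S}^b$ together with the matching of the two weak topologies; this is classical in the theory of Gelfand pairs, but it is the one place where the specific structure is used. If one prefers to avoid invoking Gelfand theory, an alternative is to establish the vanishing at infinity first on a convenient dense subalgebra of $L^{1,\natural}(G)$ and then pass to all of $L^{1,\natural}(G)$, using that $\mathcal{C}_\infty(\mathcal{S}^b)$ is closed in the supremum norm inside $\mathcal{C}_b(\mathcal{S}^b)$ and that $f\mapsto\widehat f$ is contractive by the inequality just proved; the delicate point of that route is exhibiting such a dense subalgebra whose transforms are known a priori to vanish at infinity, which again ultimately rests on the structure of $\mathcal{S}^b$.
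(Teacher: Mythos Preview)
The paper does not prove this theorem at all: it is stated in the preliminaries as a quotation from \cite[page 185]{Wolf} and is used as a black box later on. So there is no ``paper's own proof'' to compare against.

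That said, your proposal is correct and is essentially the standard argument (and the one Wolf gives). Your computation that every $\varphi\in\mathcal{S}^b$ satisfies $\sup_x|\varphi(x)|=1$ is clean and valid; the continuity of $\widehat f$ is indeed tautological from the definition of the topology on $\mathcal{S}^b$; and the identification of $\mathcal{S}^b$ with the Gelfand spectrum of the commutative Banach algebra $L^{1,\natural}(G)$, carrying the spherical Fourier transform to the Gelfand transform, is exactly how the vanishing at infinity is obtained in the reference. You are right that this identification is the only substantive step, and your sketch of why it holds (every nonzero multiplicative linear functional on $L^{1,\natural}(G)$ is integration against a bounded $K$-bi-invariant function, which multiplicativity forces to be spherical) is accurate. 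The alternative density argument you mention at the end is also viable but, as you note, ultimately relies on the same structural fact.
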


\begin{theorem}\cite[page 193]{Wolf}(Plancherel formula)\label{Plancherel}

Let $(G,K)$ be a Gelfand pair. If $f\in L^{1,\natural}(G)\cap L^{2,\natural}(G)$, then $\widehat{f}\in L^2(\mathcal{S}^+)$ and $\|\widehat{f}\|_{ L^2(\mathcal{S}^+)}=\|f\|_{L^{2,\natural}(G)}$. Moreover, the spherical Fourier transform $\mathcal{F}:L^{1,\natural}(G)\cap L^{2,\natural}(G)\longrightarrow L^2(\mathcal{S}^+)$ extends by $L^2$-continuity to an isometry from $L^{2,\natural}(G)$ onto $L^2(\mathcal{S}^+)$.
\end{theorem}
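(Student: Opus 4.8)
The plan is to derive the Plancherel identity from the spherical inversion formula recalled just above by the classical ``\(f\ast\widetilde f\)'' device, to extend the resulting isometry from a dense subspace by continuity, and finally to treat surjectivity, which I expect to be the only real difficulty.

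\textbf{Step 1: the isometry on \(\mathcal C_c^\natural(G)\).} Since \(G\) is unimodular and \(K\) is compact, \(\mathcal C_c^\natural(G)\subseteq L^{1,\natural}(G)\cap L^{2,\natural}(G)\), and \(\mathcal C_c^\natural(G)\) is dense in \(L^{2,\natural}(G)\) (average an element of \(\mathcal C_c(G)\) over \(K\times K\)). I would fix \(f\in\mathcal C_c^\natural(G)\), set \(\widetilde f(x)=\overline{f(x^{-1})}\in\mathcal C_c^\natural(G)\) and \(g=f\ast\widetilde f\in\mathcal C_c^\natural(G)\). Using \((f\ast\widetilde f)(x)=\int_G f(y)\overline{f(x^{-1}y)}\,dy\) and the change of variables \(y\mapsto x_n^{-1}y\), one gets \(\sum_{n,m}g(x_n^{-1}x_m)\overline{z_n}z_m=\int_G\bigl|\sum_m z_m\overline{f(x_m^{-1}y)}\bigr|^2\,dy\ge 0\), so \(g\) is positive definite; being also \(K\)-bi-invariant, \(g\in B^\natural(G)\cap L^{1,\natural}(G)\), so the inversion formula applies to \(g\), and evaluating it at \(x=e\) with \(\varphi(e)=1\) gives \(g(e)=\int_{\mathcal S^+}\widehat g(\varphi)\,d\mu(\varphi)\). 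Now \(g(e)=(f\ast\widetilde f)(e)=\int_G|f(y)|^2\,dy=\|f\|_{L^{2,\natural}(G)}^2\); on the other hand \(\widehat{f\ast h}=\widehat f\,\widehat h\) — which follows by writing \(\widehat{f\ast h}(\varphi)=\int_G\!\int_G f(y)h(z)\varphi(z^{-1}y^{-1})\,dy\,dz\), inserting a \(K\)-average in the variable \(y\) (legitimate by the right \(K\)-invariance of \(f\)), and applying \(\int_K\varphi(xky)\,dk=\varphi(x)\varphi(y)\) from Theorem \ref{properties spherical functions}(3) — while Theorem \ref{Properties psd functions}(2) gives \(\widehat{\widetilde f}(\varphi)=\overline{\widehat f(\varphi)}\) for \(\varphi\in\mathcal S^+\); hence \(\widehat g=|\widehat f|^2\) on \(\mathcal S^+\). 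Combining, \(\|f\|_{L^{2,\natural}(G)}^2=\int_{\mathcal S^+}|\widehat f(\varphi)|^2\,d\mu(\varphi)=\|\widehat f\|_{L^2(\mathcal S^+)}^2\) for all \(f\in\mathcal C_c^\natural(G)\).

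\textbf{Step 2: extension by continuity.} The isometry on the dense subspace \(\mathcal C_c^\natural(G)\) then extends uniquely to a linear isometry \(\mathcal F\colon L^{2,\natural}(G)\to L^2(\mathcal S^+)\). To see the extension agrees with the original transform on \(L^{1,\natural}(G)\cap L^{2,\natural}(G)\), I would approximate such an \(f\) by \(f_n\in\mathcal C_c^\natural(G)\) simultaneously in \(L^1\) and \(L^2\): Theorem \ref{Riemann-Lebesgue} gives \(\|\widehat{f_n}-\widehat f\|_\infty\le\|f_n-f\|_{L^{1,\natural}(G)}\to 0\), while \(\widehat{f_n}\) converges in \(L^2(\mathcal S^+)\) to the extension, and a common almost-everywhere limit along a subsequence identifies the two. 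In particular \(\|\widehat f\|_{L^2(\mathcal S^+)}=\|f\|_{L^{2,\natural}(G)}\) for every \(f\in L^{1,\natural}(G)\cap L^{2,\natural}(G)\).

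\textbf{Step 3: surjectivity — the main obstacle.} The range of \(\mathcal F\) is complete, hence closed in \(L^2(\mathcal S^+)\), so it remains to prove it is dense. By the multiplicativity of Step 1, \(\mathcal F(\mathcal C_c^\natural(G))\) is a subalgebra of \(\mathcal C_0(\mathcal S^+)\); it is stable under complex conjugation (since \(\widehat{\widetilde f}=\overline{\widehat f}\) on \(\mathcal S^+\)), it separates the points of \(\mathcal S^+\) (distinct bounded spherical functions, being \(K\)-bi-invariant, are distinguished by some \(\widehat f\)), and it vanishes at no point of \(\mathcal S^+\) (for each \(\varphi\) pick \(f\in\mathcal C_c^\natural(G)\) supported near \(e\) with \(\widehat f(\varphi)\ne 0\)); by Stone--Weierstrass it is therefore uniformly dense in \(\mathcal C_0(\mathcal S^+)\). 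The delicate point, which I expect to be the crux, is to upgrade uniform density in \(\mathcal C_0(\mathcal S^+)\) to density in \(L^2(\mathcal S^+,\mu)\): for an infinite measure this passage is not automatic, and here it is guaranteed by the way the Plancherel measure \(\mu\) is produced in \cite{Wolf} from the spectral decomposition of the commutative convolution algebra \(L^{1,\natural}(G)\) — \(\mu\) is carried exactly by \(\mathcal S^+\) and puts no mass where the transforms \(\widehat f\) cannot detect it. At that point I would simply invoke that construction; with it, \(\mathrm{Range}(\mathcal F)=L^2(\mathcal S^+)\) and the theorem follows.
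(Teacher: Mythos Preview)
The paper does not prove this statement at all: Theorem~\ref{Plancherel} is quoted verbatim from \cite[page~193]{Wolf} as a preliminary result, with no argument given. There is therefore nothing in the paper to compare your proposal against.

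That said, your sketch is a faithful outline of the classical proof (and, up to details, of Wolf's own argument): the \(f\ast\widetilde f\) device to pull the inversion formula back to a Plancherel identity on \(\mathcal C_c^\natural(G)\), extension by density, and then surjectivity. You are right that Step~3 is the only genuinely delicate point and that Stone--Weierstrass alone does not finish it when \(\mu\) is infinite; your honest deferral to the construction of \(\mu\) in \cite{Wolf} is appropriate, since the surjectivity ultimately rests on how the Plancherel measure is manufactured rather than on soft functional analysis. If you wanted to make Step~3 more self-contained, the standard route is to show that for every compact \(C\subset\mathcal S^+\) there exists \(f\in\mathcal C_c^\natural(G)\) with \(\widehat f\) bounded away from zero on \(C\), and then use \(\widehat f\cdot\mathcal F(\mathcal C_c^\natural(G))\) together with the regularity of \(\mu\) to approximate an arbitrary \(L^2\) function supported in \(C\).
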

\begin{corollary}\cite[page 194]{Wolf}\label{Plancherelscalar}

If $f,g\in  L^{2,\natural}(G)$, then $\widehat{f},\widehat{g}\in L^2(\mathcal{S}^+)$ and 
$$\langle \widehat{f}, \widehat{g}\rangle_{L^{2}(\mathcal{S}^+)}=\langle f,g\rangle_{L^{2,\natural}(G)}.$$
\end{corollary}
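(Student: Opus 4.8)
The plan is to obtain the polarized identity as a formal consequence of the Plancherel norm identity (Theorem \ref{Plancherel}), so no genuinely new analysis is required. First I would record what Theorem \ref{Plancherel} gives us: the spherical Fourier transform extends to an isometry $\mathcal{F}\colon L^{2,\natural}(G)\longrightarrow L^2(\mathcal{S}^+)$. In particular, for \emph{any} $f,g\in L^{2,\natural}(G)$ the functions $\widehat f=\mathcal{F}f$ and $\widehat g=\mathcal{F}g$ are well-defined elements of $L^2(\mathcal{S}^+)$, and $\|\mathcal{F}h\|_{L^2(\mathcal{S}^+)}=\|h\|_{L^{2,\natural}(G)}$ for every $h\in L^{2,\natural}(G)$. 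Since $L^{2,\natural}(G)$ is a linear subspace of $L^2(G)$ and $\mathcal{F}$ is the $L^2$-continuous extension of the manifestly linear transform on $L^{1,\natural}(G)\cap L^{2,\natural}(G)$, the map $\mathcal{F}$ is linear on all of $L^{2,\natural}(G)$; this linearity is the only structural fact beyond the norm identity that the argument needs.

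Next I would invoke the polarization identity valid in any complex Hilbert space $H$: for $u,v\in H$,
$$\langle u,v\rangle_H=\frac14\sum_{k=0}^{3} i^{k}\,\|u+i^{k}v\|_H^2 .$$
I would fix one convention for which slot of $\langle\cdot,\cdot\rangle$ is linear and use it consistently in both $L^{2,\natural}(G)$ and $L^2(\mathcal{S}^+)$, so that the two instances of the formula line up term by term. Applying the identity with $u=f$, $v=g$ in $L^{2,\natural}(G)$ and with $u=\widehat f$, $v=\widehat g$ in $L^2(\mathcal{S}^+)$, and using linearity of $\mathcal{F}$ to write $\widehat f+i^{k}\widehat g=\mathcal{F}\bigl(f+i^{k}g\bigr)$, each summand on the spectral side becomes $i^{k}\bigl\|\mathcal{F}(f+i^{k}g)\bigr\|_{L^2(\mathcal{S}^+)}^2=i^{k}\bigl\|f+i^{k}g\bigr\|_{L^{2,\natural}(G)}^2$ by the isometry property (note $f+i^{k}g\in L^{2,\natural}(G)$, being a linear combination of $K$-bi-invariant $L^2$ functions). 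Summing over $k=0,1,2,3$ yields $\langle\widehat f,\widehat g\rangle_{L^2(\mathcal{S}^+)}=\langle f,g\rangle_{L^{2,\natural}(G)}$, which is the claim.

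I do not expect any real obstacle: the corollary is purely formal once Theorem \ref{Plancherel} is in hand. The only points meriting a moment's care are bookkeeping ones — that the inner-product conventions on the two sides are aligned so the four polarization terms correspond, and that $\mathcal{F}$ is invoked as a linear isometry on the full space $L^{2,\natural}(G)$ rather than merely on the dense subspace where it is initially defined. If one prefers to sidestep the explicit four-term identity, I would instead remark that $(f,g)\mapsto\langle f,g\rangle_{L^{2,\natural}(G)}$ and $(f,g)\mapsto\langle \widehat f,\widehat g\rangle_{L^2(\mathcal{S}^+)}$ are two sesquilinear forms on $L^{2,\natural}(G)\times L^{2,\natural}(G)$ that coincide on the diagonal by Theorem \ref{Plancherel}, and a sesquilinear form on a complex vector space is determined by its values on the diagonal; hence the two forms agree everywhere.
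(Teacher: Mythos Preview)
Your argument is correct: the inner-product identity follows from the norm identity of Theorem \ref{Plancherel} by polarization, exactly as you outline. The paper does not supply its own proof of this corollary --- it simply cites the result from \cite[page 194]{Wolf} --- so there is nothing to compare against beyond noting that your polarization argument is the standard derivation and would be what any reader supplies mentally.
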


%%%%%%%%%%%%%%%%%%%%%%%%%%%%%%%%%%%%%%%%%%%%%%%%%%%%%%%%%%%%%%%%%
\subsection{Schatten-von Neumann classes}
Our references for this part are  \cite{ Dunford,Wong}.

Let $H$ be a separable complex Hilbert space. Let $T$ be a bounded operator on $H$. Let us denote by $T^\ast$ its adjoint operator.  We define the operator $|T|: H \longrightarrow H$ by $|T|=\sqrt{T^\ast T}$. Obviously, $|T|$ is a positive operator. Now, let $T$ be a compact operator. We denote by $s_k(T), \,k=1, 2,\cdots$, the eigenvalues of $|T|$ (such eigenvalues are called singular values of $T$). 
A compact operator $T:H\longrightarrow H$ is said to be in the Schatten-von Neumann class $S_p(H), 1\le p<\infty,$ if 
$$\sum_{k=1}^\infty (s_k(T))^p<\infty.$$
The space $S_p(H)$ is a complex Banach space when it is endowed with the norm :
$$\|T\|_{S_p(H)}=\left(\sum_{k=1}^\infty (s_k(T))^p\right)^{\frac{1}{p}}.$$
$S_1(H)$ and $S_2(H)$ are customaryly called the trace class and  the Hilbert-Schmidt class respectively. By convention, $S_\infty(H)=\mathcal{B}(H)$, the space of bounded operators on $H$.  

The following  result may be useful.

\begin{theorem}\cite[page 15, Proposition 2.4]{Wong}

Let $T: H\longrightarrow H$ be a positive operator. If $\sum_{k=1}^\infty \langle Te_k,e_k\rangle<\infty$ for all orthonormal bases $\{e_k : k=1,2,\cdots\}$ of $H$, then $T$ is in the trace class $S_1(H)$.

\end{theorem}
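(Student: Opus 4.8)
The plan is to obtain compactness of $T$ essentially for free and then to read off the trace from a spectral eigenbasis. Since $T$ is bounded and positive it has a unique bounded positive square root $R=T^{1/2}$, with $R^{\ast}=R$ and $R^{2}=T$; moreover $|T|=\sqrt{T^{\ast}T}=\sqrt{T^{2}}=T$, so once $T$ is known to be compact its singular values $s_{k}(T)$ are exactly its non-negative eigenvalues. Now, for every orthonormal basis $\{e_{k}\}$ of $H$,
$$\sum_{k=1}^{\infty}\|Re_{k}\|^{2}=\sum_{k=1}^{\infty}\langle R^{2}e_{k},e_{k}\rangle=\sum_{k=1}^{\infty}\langle Te_{k},e_{k}\rangle<\infty$$
by hypothesis, so $R$ is a Hilbert--Schmidt operator; in particular $R$ is compact, whence $T=R\circ R$ is compact.

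Next I would apply the spectral theorem for the compact self-adjoint operator $T$: there is an at most countable orthonormal family $\{\phi_{j}\}$ in $H$ and numbers $\lambda_{j}>0$ with $\lambda_{j}\to 0$ such that $T=\sum_{j}\lambda_{j}\langle\cdot,\phi_{j}\rangle\phi_{j}$, and by the remark above the $\lambda_{j}$ are exactly the singular values $s_{j}(T)$ counted with multiplicity. Because $T=T^{\ast}$, one has $\overline{\operatorname{ran}T}=(\ker T)^{\perp}=\overline{\operatorname{span}\{\phi_{j}\}}$, so adjoining to $\{\phi_{j}\}$ an arbitrary orthonormal basis of $\ker T$ produces an orthonormal basis $\{f_{k}\}$ of $H$ (here separability of $H$ guarantees these families are countable). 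Applying the hypothesis to this specific basis, and using $Tf_{k}=0$ whenever $f_{k}\in\ker T$, we obtain
$$\sum_{j}s_{j}(T)=\sum_{j}\lambda_{j}=\sum_{j}\langle T\phi_{j},\phi_{j}\rangle=\sum_{k=1}^{\infty}\langle Tf_{k},f_{k}\rangle<\infty,$$
which is precisely the assertion $T\in S_{1}(H)$; this also exhibits $\|T\|_{S_{1}(H)}$ as the common value $\sum_{k}\langle Tf_{k},f_{k}\rangle$.

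The only delicate point is the first step: the singular values are not even defined until $T$ has been shown to be compact, so one cannot simply start from a spectral decomposition. The square-root device disposes of this at once, reducing everything to the elementary fact that $\sum_{k}\|Re_{k}\|^{2}<\infty$ forces $R\in S_{2}(H)$, hence $R$ (and so $T=R^{2}$) compact. An alternative ending, if one is willing to quote that a product of two Hilbert--Schmidt operators lies in the trace class with $\|AB\|_{S_{1}(H)}\le\|A\|_{S_{2}(H)}\|B\|_{S_{2}(H)}$, is simply to take $A=B=R$ and conclude $T=R^{2}\in S_{1}(H)$ directly; I would nonetheless favour the spectral-theorem route since it is self-contained and, as a bonus, identifies $\|T\|_{S_{1}(H)}$ with the sum of the eigenvalues of $T$.
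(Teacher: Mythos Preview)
Your argument is correct. The square-root trick is the right idea: from $\sum_k\langle Te_k,e_k\rangle=\sum_k\|T^{1/2}e_k\|^2<\infty$ you get $T^{1/2}\in S_2(H)$, hence compact, hence $T=(T^{1/2})^2$ compact; the spectral theorem then lets you read off $\sum_j s_j(T)=\sum_j\lambda_j$ by applying the hypothesis to an eigenbasis extended across $\ker T$. Every step is justified, and your remark that compactness must be established before talking about singular values is well taken.

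As for comparison with the paper: there is nothing to compare. The paper does not prove this statement at all; it merely quotes it as Proposition~2.4 from Wong's book and uses it as a black box in the proof of Theorem~\ref{TmTrace-class}. So your proof supplies an argument the paper omits. The alternative ending you mention (quoting $\|AB\|_{S_1}\le\|A\|_{S_2}\|B\|_{S_2}$ with $A=B=T^{1/2}$) is equally valid and slightly shorter, but your spectral route is self-contained and, as you note, also identifies $\|T\|_{S_1(H)}$ with the common value of the diagonal sums.
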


%%%%%%%%%%%%%%%%%%%%%%%%%%%%%%%%%%%%%%%%%%%%%%%%%%%%%%%%%%%%%%%%%
\subsection{Interpolation theorems}
Our references for this part are \cite{Tartar, Wong, Zhu}.
Let us briefly recap the results we will need when it comes to interpolating between function spaces. The follwing theorem  is known as the Riesz-Thorin interpolation theorem orRiesz-Thorin convexity theorem. 
\begin{theorem}\cite[page 104]{Tartar}\label{Riesz-Thorin}

Let $1 \le p_0, p_1, q_0, q_1 \le\infty$. If $T : L^{p_0} (\mu) \to L^{q_0}(\nu)$ is a bounded linear operator  with norm $M_0$ and $T : L^{p_1} (\mu) \to L^{q_1}(\nu)$ is a bounded linear operator  with norm $M_1$, then  for $0 < \theta< 1$, $T : L^{p} (\mu) \to L^{q}(\nu)$ is a bounded linear operator with norm $M \le M_0^{ 1-\theta}M_1^{\theta}$, where
$\displaystyle \frac{1}{p} = \frac{1-\theta}{p_0} + \frac{\theta}{p_1}$ and $\displaystyle\frac{1}{q} = \frac{1-\theta}{q_0} + \frac{\theta}{q_1}$. 
\end{theorem}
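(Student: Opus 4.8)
The statement is the classical Riesz--Thorin convexity theorem, and the plan is to prove it by \emph{analytic interpolation}: reduce the operator bound to an estimate on a bilinear form, build a suitable analytic family of functions, and apply the Hadamard three-lines lemma.

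First I would dualize. Since $\|Tf\|_{L^q(\nu)}=\sup\bigl\{\,\bigl|\int_Y (Tf)\,g\,d\nu\bigr| : \|g\|_{L^{q'}(\nu)}\le 1\,\bigr\}$, with $q'$ the conjugate exponent of $q$, and since simple functions are dense in $L^p(\mu)$ and in $L^{q'}(\nu)$ when $p,q'<\infty$, it suffices to prove $\bigl|\int_Y (Tf)\,g\,d\nu\bigr|\le M_0^{1-\theta}M_1^{\theta}$ for all simple $f,g$ normalized so that $\|f\|_{L^p(\mu)}=\|g\|_{L^{q'}(\nu)}=1$; the endpoint cases ($p=\infty$, or $q=1$, or $q=\infty$) are either trivial or need only a minor variant of this reduction.

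Next I would introduce the analytic family. Writing $f=\sum_j |a_j|e^{i\alpha_j}\chi_{A_j}$ and $g=\sum_k |b_k|e^{i\beta_k}\chi_{B_k}$ with the $A_j$ (resp.\ the $B_k$) pairwise disjoint of finite measure, and setting, for $z$ in the strip $S=\{\,0\le\mathrm{Re}\,z\le 1\,\}$, $\tfrac{1}{p(z)}=\tfrac{1-z}{p_0}+\tfrac{z}{p_1}$ and $\tfrac{1}{q'(z)}=\tfrac{1-z}{q_0'}+\tfrac{z}{q_1'}$, I define
\[
f_z=\sum_j |a_j|^{\,p/p(z)}e^{i\alpha_j}\chi_{A_j},\qquad
g_z=\sum_k |b_k|^{\,q'/q'(z)}e^{i\beta_k}\chi_{B_k},
\]
chosen so that $f_\theta=f$ and $g_\theta=g$. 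Then
\[
\Phi(z)=\int_Y (Tf_z)\,g_z\,d\nu
=\sum_{j,k}|a_j|^{p/p(z)}|b_k|^{q'/q'(z)}e^{i(\alpha_j+\beta_k)}\int_Y (T\chi_{A_j})\,\chi_{B_k}\,d\nu
\]
is a finite sum of exponentials in $z$, hence entire and bounded on $S$. On the edge $\mathrm{Re}\,z=0$ one has $\mathrm{Re}\,\tfrac{1}{p(it)}=\tfrac{1}{p_0}$, so $\|f_{it}\|_{L^{p_0}(\mu)}=\|f\|_{L^p(\mu)}^{p/p_0}=1$ and likewise $\|g_{it}\|_{L^{q_0'}(\nu)}=1$; hence, by the $L^{p_0}\to L^{q_0}$ bound on $T$ together with Hölder's inequality, $|\Phi(it)|\le M_0$. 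Symmetrically $|\Phi(1+it)|\le M_1$ on the edge $\mathrm{Re}\,z=1$.

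Finally, the three-lines lemma applied to $\Phi$ gives $|\Phi(\theta)|\le M_0^{1-\theta}M_1^{\theta}$, and since $\Phi(\theta)=\int_Y (Tf)\,g\,d\nu$, taking the supremum over admissible $f,g$ yields $\|Tf\|_{L^q(\nu)}\le M_0^{1-\theta}M_1^{\theta}$ for simple $f$ with $\|f\|_{L^p(\mu)}=1$; density of simple functions then finishes the proof. The hard part will be the bookkeeping in the construction of $f_z$ and $g_z$: the exponents $p/p(z)$ and $q'/q'(z)$ must be chosen exactly so that the relevant boundary norms are identically $1$ on each edge of $S$ while $\Phi$ stays analytic and bounded, and one must be slightly careful at the endpoint exponents. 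Once that is arranged, the rest is just Hölder's inequality and the three-lines lemma.
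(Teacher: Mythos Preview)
Your proof sketch is the classical analytic-interpolation argument and is essentially correct; the construction of $f_z$, $g_z$ and the application of the three-lines lemma are exactly the standard route, and you have correctly flagged the endpoint exponents and density of simple functions as the places requiring care.

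There is, however, nothing to compare against: the paper does not prove this theorem. It is quoted from \cite{Tartar} in the preliminaries section as a tool to be used later (in Theorems~\ref{theorem:interpolationmdansL1}, \ref{theorem:fourierdansLp}, and \ref{TmDansSp}), with no proof supplied. So your argument is not an alternative to the paper's proof but a supplement the authors chose to omit; if anything, it is more than the paper provides.
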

For the Lebesgue spaces $L^p(X,\mu)$  and the
Schatten-von Neumann class $S_p(H)$ with  $1\le  p \le \infty$, we have the following interpolation results. 
\begin{theorem}\cite[page 20]{Wong}\label{InterpolationParticular}

For $1\le  p \le \infty$,
\begin{enumerate}
\item $[L^1(X,\mu), L^\infty(X,\mu)]_{\frac{1}{p'}}=L^p(X,\mu),$
\item  $[S_1(H), S_\infty(H)]_{\frac{1}{p'}}=S_p(H),$
\end{enumerate}
where $p'$ is such that $\displaystyle\frac{1}{p}+\frac{1}{p'}=1$.
\end{theorem}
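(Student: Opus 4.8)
The plan is to prove both identities with the complex (Calder\'on) interpolation method, in each case by establishing a two-sided norm comparison; the recurring device is Hadamard's three-lines lemma applied to a scalar analytic function on the strip $S=\{z\in\mathbb{C}:0\le\operatorname{Re}z\le1\}$. Observe first that with $(p_0,p_1)=(1,\infty)$ the exponent relation from Theorem \ref{Riesz-Thorin} gives $\frac1p=1-\theta$, i.e. $\theta=\frac1{p'}$, so the parameter appearing in the statement is the forced one.

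For part (1), to get $L^p(X,\mu)\subseteq[L^1,L^\infty]_{1/p'}$ with $\|f\|_{[L^1,L^\infty]_{1/p'}}\le\|f\|_{L^p}$ I would fix $f$ with $\|f\|_{L^p}=1$ and use the analytic family $f_z(x)=|f(x)|^{\,p(1-z)}\operatorname{sgn}f(x)$, set to $0$ where $f=0$: then $f_{1/p'}=f$, on $\operatorname{Re}z=0$ the modulus equals $|f|^p$ so $\|f_z\|_{L^1}=1$, on $\operatorname{Re}z=1$ the modulus is $\le1$ so $\|f_z\|_{L^\infty}\le1$, and $z\mapsto f_z$ is bounded and analytic into $L^1+L^\infty$; since the interpolation norm is the infimum over admissible families of the larger of the two boundary suprema, this gives $\|f\|_{[L^1,L^\infty]_{1/p'}}\le1$. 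For the reverse inequality I would argue by duality: given an admissible $F$ with $F(1/p')=f$ and boundary bounds $M_0$ in $L^1$ and $M_1$ in $L^\infty$, and a simple $g$ with $\|g\|_{L^{p'}}\le1$, form the companion family $g_z=|g|^{\,p'z}\operatorname{sgn}g$ (so $g_{1/p'}=g$, $\|g_{it}\|_{L^\infty}\le1$, $\|g_{1+it}\|_{L^1}\le1$) and apply the three-lines lemma to $\Phi(z)=\int_X F(z)\,g_z\,d\mu$; from $|\Phi(it)|\le M_0$ and $|\Phi(1+it)|\le M_1$ one obtains $\big|\int_X fg\,d\mu\big|=|\Phi(1/p')|\le M_0^{1/p}M_1^{1/p'}$, and taking the supremum over $g$ and then the infimum over $F$ yields $\|f\|_{L^p}\le\|f\|_{[L^1,L^\infty]_{1/p'}}$.

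Part (2) runs in parallel, with the singular value decomposition playing the role of pointwise values; since finite-rank operators are dense in $S_p(H)$, it suffices to handle finite-rank $T$ and pass to closures. For $T=\sum_k s_k\langle\cdot,e_k\rangle f_k$ of finite rank with $\|T\|_{S_p}=1$ I would put $T_z=\sum_k s_k^{\,p(1-z)}\langle\cdot,e_k\rangle f_k$; because pre- and post-composition with the fixed partial isometries of the decomposition leave singular values unchanged, the singular values of $T_z$ are the $s_k^{\,p(1-z)}$, whence $T_{1/p'}=T$, $\|T_{it}\|_{S_1}=\sum_k s_k^p=1$ and $\|T_{1+it}\|_{S_\infty}=\max_k1\le1$, so $\|T\|_{[S_1,S_\infty]_{1/p'}}\le1$. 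The reverse inequality uses the trace pairing $\langle A,B\rangle=\operatorname{tr}(AB^*)$, under which $S_{p'}(H)$ is the dual of $S_p(H)$ for $1\le p<\infty$: for an admissible $F$ and a finite-rank $B$ with $\|B\|_{S_{p'}}\le1$ one builds $B_z$ from the SVD of $B$ with exponent $p'z$ and applies the three-lines lemma to $\Phi(z)=\operatorname{tr}\big(F(z)B_z^*\big)$, exactly as in part (1).

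The hard part will be the verification that the explicit families $f_z,g_z,T_z,B_z$ are genuinely admissible for the interpolation functor: the boundedness and analyticity on $S$ of maps like $z\mapsto|f(x)|^{p(1-z)}=e^{p(1-z)\log|f(x)|}$ is exactly what makes the reduction to simple functions and finite-rank operators (followed by a density argument) essential, and in part (2) there is additional non-commutative bookkeeping needed to pin down the singular values of $T_z$. Granting the standard formal properties of the functor $[\cdot,\cdot]_\theta$ from \cite{Tartar,Wong}, everything else is a routine application of the three-lines lemma.
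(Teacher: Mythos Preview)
The paper does not prove this theorem: it is quoted without proof as a background result from \cite[page 20]{Wong} (together with \cite{Tartar,Zhu}), so there is no ``paper's own proof'' to compare against. Your sketch is the standard textbook argument for these identities via the complex interpolation method and the three-lines lemma, and is correct in outline; it is essentially what one finds in the cited references.
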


%%%%%%%%%%%%%%%%%%%%%%%%%%%%%%%%%%%%%%%%%%%%%%%%%%%%%%%%%%%%%%%%%
\section{Spherical Fourier multipliers}\label{Fourier multiplier operators}
%%%%%%%%%%%%%%%%%%%%%%%%%%%%%%%%%%%%%%%%%%%%%%%%%%%%%%%%%%%%%%%%%
In this section, we define a spherical Fourier multiplier  related to the Gelfand pair $(G,K)$ by the means of the spherical Fourier transform. 

For a function  $m :\mathcal{S}^+\longrightarrow\mathbb{C}$, we define the spherical Fourier multiplier  by the formal expression
\begin{equation}
T_mf (x)=\displaystyle\int_{\mathcal{S}^+}m(\varphi)\widehat{f}(\varphi)\varphi (x)d\mu (\varphi),\, x\in G.
\end{equation}
One can observe that 
\begin{equation}\label{equation:FourierDeLm}
\mathcal{F}(T_mf)=m\widehat{f}.
\end{equation}

\begin{theorem} 
Let $f,g\in L^{1,\natural}(G))$. Then, the following equalities hold. 
\begin{enumerate}
\item $T_m(f\ast g)=(T_mf)\ast g,$
\item $T_{m_1}f\ast  T_{m_2}g=T_{m_1m_2}(f\ast g).$
\end{enumerate}
\end{theorem}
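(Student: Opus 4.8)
The plan is to exploit the fundamental identity \eqref{equation:FourierDeLm}, namely $\mathcal{F}(T_m f) = m\,\widehat{f}$, together with the fact that the spherical Fourier transform turns convolution into pointwise multiplication. The first thing I would verify (or invoke from the preliminaries, since $(G,K)$ is a Gelfand pair and hence $L^{1,\natural}(G)$ is a commutative convolution algebra) is that $\widehat{f\ast g} = \widehat{f}\,\widehat{g}$ for $f,g\in L^{1,\natural}(G)$; this is immediate from the definition of a character $\chi_\varphi$ applied to $f\ast g$, or equivalently from $\mathcal{F}(f\ast g)(\varphi)=\int_G (f\ast g)(x)\varphi(x^{-1})\,dx$ and a change of variables using unimodularity and $K$-bi-invariance.

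For part (1): applying $\mathcal{F}$ to both sides, the left-hand side gives $\mathcal{F}(T_m(f\ast g)) = m\cdot\widehat{f\ast g} = m\,\widehat{f}\,\widehat{g}$, while the right-hand side gives $\mathcal{F}((T_mf)\ast g) = \widehat{T_mf}\cdot\widehat{g} = (m\widehat{f})\widehat{g}$. These agree, so the two functions have the same spherical Fourier transform; injectivity of $\mathcal{F}$ on the relevant space then yields the equality. For part (2): similarly, $\mathcal{F}(T_{m_1}f \ast T_{m_2}g) = \widehat{T_{m_1}f}\cdot\widehat{T_{m_2}g} = m_1\widehat{f}\cdot m_2\widehat{g} = (m_1 m_2)(\widehat{f}\,\widehat{g}) = (m_1 m_2)\widehat{f\ast g} = \mathcal{F}(T_{m_1 m_2}(f\ast g))$, and again injectivity closes the argument.

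The main obstacle is not the algebra — that is a one-line computation on the Fourier side — but the analytic bookkeeping needed to make the argument rigorous: one must ensure the expressions $T_m(\cdot)$ are well defined (e.g. that $m\widehat{f}\in L^1(\mathcal{S}^+)$ so the defining integral converges and the inversion formula applies), and one must justify the cancellation step ``same spherical Fourier transform implies same function.'' The cleanest route is to observe that \eqref{equation:FourierDeLm} already encodes $T_m f$ completely through its transform, so the identities can be read off directly at the level of transforms without ever inverting; alternatively, one restricts to $f,g$ in $B^\natural(G)\cap L^{1,\natural}(G)$ (or imposes that $m$ is bounded so that $m\widehat f\in L^1\cap L^2(\mathcal S^+)$) where the inversion formula and the injectivity of $\mathcal{F}$ from Theorem \ref{Plancherel} are available, and then the stated equalities hold as identities in the appropriate function space. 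I would state the result at this level of generality and carry out the short transform-side computation, flagging the domain hypotheses where the formal manipulations become genuine equalities.
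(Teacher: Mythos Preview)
Your approach is essentially identical to the paper's: both arguments apply $\mathcal{F}$ to each side, use $\widehat{f\ast g}=\widehat{f}\,\widehat{g}$ together with \eqref{equation:FourierDeLm}, and read off the equalities on the transform side. The paper's proof is in fact just as formal as your computation and does not address the analytic bookkeeping (well-definedness of $T_m$, injectivity of $\mathcal{F}$) that you rightly flag, so your caution is appropriate but not something the paper itself supplies.
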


\begin{proof}

\begin{itemize}
\item 
Using Formula (\ref{equation:FourierDeLm}), we have
$$\mathcal{F}(T_m(f\ast g))=m\widehat{f\ast g}=m\widehat{f}\widehat{g}=\mathcal{F}(T_mf)\widehat{g}.$$
Thus, $T_m(f\ast g)=(T_mf)\ast g.$\\
\item Again by Formula (\ref{equation:FourierDeLm}), we have
\begin{align*}
\mathcal{F}(T_{m_1m_2}(f\ast g))&=m_1m_2\widehat{f\ast g}\\
&=m_1m_2\widehat{f}\widehat{g}\\
&=m_1\widehat{f}m_2\widehat{g}\\
&=\mathcal{F}(T_{m_1}f)\mathcal{F}(L_{m_2}g)\\
&=\mathcal{F}(T_{m_1}f\ast TL_{m_2}g)
\end{align*}
This implies that $T_{m_1}f\ast  T_{m_2}g=T_{m_1m_2}(f\ast g).$
\end{itemize}
\end{proof}

The following result identifies the Hilbert adjoint of the operator $T_m$. 

\begin{theorem} 
The adjoint of the operator $T_m : L^{2,\natural}(G)\longrightarrow L^{2,\natural}(G)$ is the operator $T_m^\ast : L^{2,\natural}(G)\longrightarrow L^{2,\natural}(G)$  defined by $T_m^\ast f=T_{\overline{m}}f$, where $\overline{m}$ is the complex conjugate  of $m$.
\end{theorem}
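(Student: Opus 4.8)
The plan is to pass to the ``frequency side'' via the Plancherel isometry and use that, there, $T_m$ is nothing but multiplication by $m$. First I would record the standing hypothesis implicit in the statement: $T_m$ (and likewise $T_{\overline m}$, since $\|\overline m\|_\infty=\|m\|_\infty$) is a bounded operator on $L^{2,\natural}(G)$; by Theorem \ref{Plancherel} this is equivalent to $\widehat f\mapsto m\widehat f$ being bounded on $L^2(\mathcal{S}^+)$, and it makes the identity $\widehat{T_m f}=m\widehat f$ from \eqref{equation:FourierDeLm} a genuine $L^2$-statement (the surjectivity of $\mathcal F$ onto $L^2(\mathcal S^+)$ in Theorem \ref{Plancherel} guarantees $T_mf\in L^{2,\natural}(G)$ is well defined). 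The key tool is Corollary \ref{Plancherelscalar}, the polarized Plancherel formula $\langle\widehat f,\widehat g\rangle_{L^2(\mathcal S^+)}=\langle f,g\rangle_{L^{2,\natural}(G)}$.

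Then for $f,g\in L^{2,\natural}(G)$ I would compute, using Corollary \ref{Plancherelscalar} in the first and last equalities, \eqref{equation:FourierDeLm} in the second and second-to-last, and the elementary identity $\overline{\overline{m}\,\widehat g}=m\,\overline{\widehat g}$ in the middle:
\begin{align*}
\langle T_m f,g\rangle_{L^{2,\natural}(G)}
&=\langle \widehat{T_m f},\widehat g\rangle_{L^2(\mathcal S^+)}
=\langle m\widehat f,\widehat g\rangle_{L^2(\mathcal S^+)}
=\int_{\mathcal S^+} m(\varphi)\widehat f(\varphi)\overline{\widehat g(\varphi)}\,d\mu(\varphi)\\
&=\int_{\mathcal S^+}\widehat f(\varphi)\,\overline{\overline{m(\varphi)}\,\widehat g(\varphi)}\,d\mu(\varphi)
=\langle \widehat f,\overline m\,\widehat g\rangle_{L^2(\mathcal S^+)}
=\langle \widehat f,\widehat{T_{\overline m} g}\rangle_{L^2(\mathcal S^+)}
=\langle f,T_{\overline m} g\rangle_{L^{2,\natural}(G)}.
\end{align*}
Since this holds for all $f,g$, uniqueness of the Hilbert adjoint gives $T_m^\ast=T_{\overline m}$.

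The computation itself is routine; the only points requiring genuine care are the well-definedness issues flagged above, and the absolute convergence of the integrals, which follows from the Cauchy--Schwarz inequality together with $\widehat f,\widehat g\in L^2(\mathcal S^+)$ and the boundedness of $m$. If one prefers not to assume $m\in L^\infty(\mathcal S^+,\mu)$ outright, the same chain of equalities can first be carried out on a dense subspace on which all quantities are finite (e.g.\ $f,g\in L^{1,\natural}(G)\cap L^{2,\natural}(G)$ with suitably integrable transforms) and then extended by continuity, which I expect to be the only real subtlety in a fully rigorous write-up.
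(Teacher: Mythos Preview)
Your proof is correct and follows essentially the same route as the paper's own argument: pass to the frequency side via Corollary~\ref{Plancherelscalar}, use \eqref{equation:FourierDeLm} to identify $\widehat{T_mf}=m\widehat f$, move the factor $m$ across the integral as $\overline{\overline m}$, and return via Corollary~\ref{Plancherelscalar}. The additional care you take in flagging the boundedness hypothesis and the density/continuity extension is not present in the paper but is a welcome clarification.
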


\begin{proof} Let $f,g\in L^{2,\natural}(G)$. Then, 
\begin{align*}
\langle T_mf,g\rangle_{L^{2,\natural}(G)} &= \langle \mathcal{F}(T_mf),  \widehat{g}\rangle_{L^{2}(\mathcal{S}^+)}\,\mbox{(Corollary \ref{Plancherelscalar})}\\
&= \langle m\widehat{f},  \widehat{g}\rangle_{L^{2}(\mathcal{S}^+)}\\
&=\displaystyle\int_{\mathcal{S}^+}m(\varphi)\widehat{f}(\varphi)\overline{\widehat{g}(\varphi)}d\mu (\varphi)\\
&=\displaystyle\int_{\mathcal{S}^+}\widehat{f}(\varphi)\overline{\overline{m(\varphi)}\widehat{g}(\varphi)}d\mu (\varphi)\\
&= \langle \widehat{f},  \overline{m}\widehat{g}\rangle_{L^{2}(\mathcal{S}^+)}\\
&= \langle \widehat{f},  \mathcal{F}(T_{\overline{m}}g)\rangle_{L^{2}(\mathcal{S}^+)}\\
&= \langle f, T_{\overline{m}}g\rangle_{L^{2,\natural}(G)}.
\end{align*}
Thus, $T_m^\ast=T_{\overline{m}}.$
\end{proof}

\begin{theorem}\label{theorem:boundedL1} 
If  $m\in L^{1}(\mathcal{S}^+)$,  then $T_m : L^{1,\natural}(G)\longrightarrow L^{\infty,\natural}(G)$ is bounded and  $$\|T_m\|\le \|m\|_{L^{1}(\mathcal{S}^+)}$$
\end{theorem}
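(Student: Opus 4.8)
The plan is to estimate $\|T_mf\|_{L^{\infty,\natural}(G)}$ directly from the defining integral formula for $T_mf$. First I would recall that, by hypothesis, $m\in L^1(\mathcal{S}^+)$ and $f\in L^{1,\natural}(G)$, so by the Riemann--Lebesgue-type estimate of Theorem~\ref{Riemann-Lebesgue} the spherical Fourier transform $\widehat f$ lies in $\mathcal{C}_0(\mathcal{S}^b)$ with $\|\widehat f\|_\infty\le\|f\|_{L^{1,\natural}(G)}$. In particular $\widehat f$ is bounded on $\mathcal{S}^+$, so the product $m\widehat f$ is integrable on $\mathcal{S}^+$, which guarantees that the integral defining $T_mf(x)$ converges absolutely for every $x\in G$.

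Next I would use the uniform boundedness of positive definite spherical functions: by the remark following Theorem~\ref{Properties psd functions}, every $\varphi\in\mathcal{S}^+$ satisfies $|\varphi(x)|\le\varphi(e)=1$ for all $x\in G$. Hence, for each fixed $x\in G$,
\begin{align*}
|T_mf(x)| &= \left|\int_{\mathcal{S}^+} m(\varphi)\widehat f(\varphi)\varphi(x)\,d\mu(\varphi)\right| \\
&\le \int_{\mathcal{S}^+} |m(\varphi)|\,|\widehat f(\varphi)|\,|\varphi(x)|\,d\mu(\varphi) \\
&\le \|\widehat f\|_\infty \int_{\mathcal{S}^+} |m(\varphi)|\,d\mu(\varphi) \\
&= \|\widehat f\|_\infty\,\|m\|_{L^1(\mathcal{S}^+)} \\
&\le \|m\|_{L^1(\mathcal{S}^+)}\,\|f\|_{L^{1,\natural}(G)}.
\end{align*}
Taking the essential supremum over $x\in G$ gives $\|T_mf\|_{L^{\infty,\natural}(G)}\le\|m\|_{L^1(\mathcal{S}^+)}\|f\|_{L^{1,\natural}(G)}$, which is the desired bound. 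One should also note that $T_mf$ is $K$-bi-invariant because each $\varphi\in\mathcal{S}^+$ is $K$-bi-invariant, so $T_mf$ genuinely lands in $L^{\infty,\natural}(G)$; this justifies the codomain in the statement.

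I do not expect a serious obstacle here: the argument is a routine application of the triangle inequality for integrals together with the two a priori bounds already available in the preliminaries ($|\varphi|\le 1$ on $\mathcal{S}^+$ and $\|\widehat f\|_\infty\le\|f\|_{L^{1,\natural}(G)}$). The only point requiring a little care is the preliminary check that $m\widehat f\in L^1(\mathcal{S}^+)$ so that $T_mf$ is well defined pointwise; this follows immediately since $m\in L^1$ and $\widehat f\in L^\infty$. The measurability of $x\mapsto T_mf(x)$ and the passage to the essential supremum are standard.
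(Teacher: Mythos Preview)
Your proposal is correct and follows essentially the same route as the paper: both arguments bound $|T_mf(x)|$ pointwise by pulling $\|\widehat f\|_\infty$ out of the integral, invoking $|\varphi(x)|\le 1$ for $\varphi\in\mathcal{S}^+$, and then applying the Riemann--Lebesgue estimate $\|\widehat f\|_\infty\le\|f\|_{L^{1,\natural}(G)}$. Your version adds the minor extra observations that $m\widehat f\in L^1(\mathcal{S}^+)$ so the defining integral converges, and that $T_mf$ is $K$-bi-invariant, which the paper leaves implicit.
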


\begin{proof}
We have 
\begin{align*}
|T_mf(x)|&\le \displaystyle\int_{\mathcal{S}^+}|m(\varphi)|| \widehat{f}(\varphi)||\varphi (x)|d\mu (\varphi)\\ 
 &\le  \displaystyle\int_{\mathcal{S}^+}|m(\varphi)|\|\widehat{f}\|_\infty|\varphi (x)|d\mu (\varphi)\\ 
 &\le \|\widehat{f}\|_\infty \displaystyle\int_{\mathcal{S}^+}|m(\varphi)||\varphi (x)|d\mu (\varphi)
\end{align*}
Since  $\|\widehat{f}\|_\infty \le \|f\|_{{L^{1,\natural}(G)}}$ (Theorem \ref{Riemann-Lebesgue}) and $\forall x\in G,|\varphi (x)| \le 1$, then $$|T_mf(x)|\le \|f\|_{{L^{1,\natural}(G)}}\|\|m\|_{L^{1}(\mathcal{S}^+)}.$$
 This implies $$\|T_mf\|_{L^{\infty,\natural}(G)}\le  \|f\|_{L^{1,\natural}(G)}\|m\|_{L^{1}(\mathcal{S}^+)}.$$
Thus,  $T_m : L^{1,\natural}(G)\longrightarrow L^{\infty,\natural}(G)$ is bounded and  $$\|T_m\|\le \|m\|_{L^{1}(\mathcal{S}^+)}$$
 \end{proof}

\begin{theorem}\label{theorem:boundedLinfty}
If  $m\in L^{\infty}(\mathcal{S}^+)$,  then $T_m : L^{2,\natural}(G)\longrightarrow L^{2,\natural}(G)$ is bounded and $$\|T_m\|\le \|m\|_{L^\infty(\mathcal{S}^+)}.$$
\end{theorem}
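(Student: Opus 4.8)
The plan is to use the Plancherel theorem for the spherical Fourier transform together with the fact that multiplication by a bounded function is a bounded operator on $L^2(\mathcal{S}^+)$. Concretely, I would argue that $T_m$ is, up to the Plancherel isometry, simply the pointwise multiplication operator $M_m : \widehat{f}\mapsto m\widehat{f}$ on $L^2(\mathcal{S}^+)$, and the norm of such a multiplication operator is controlled by $\|m\|_{L^\infty(\mathcal{S}^+)}$.

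First I would take $f\in L^{2,\natural}(G)$. By Theorem \ref{Plancherel}, $\widehat{f}\in L^2(\mathcal{S}^+)$ with $\|\widehat{f}\|_{L^2(\mathcal{S}^+)}=\|f\|_{L^{2,\natural}(G)}$. Next, since $m\in L^\infty(\mathcal{S}^+)$, the product $m\widehat{f}$ lies in $L^2(\mathcal{S}^+)$ and satisfies the pointwise estimate $|m(\varphi)\widehat{f}(\varphi)|\le \|m\|_{L^\infty(\mathcal{S}^+)}|\widehat{f}(\varphi)|$ for $\mu$-almost every $\varphi$, hence
$$\|m\widehat{f}\|_{L^2(\mathcal{S}^+)}\le \|m\|_{L^\infty(\mathcal{S}^+)}\|\widehat{f}\|_{L^2(\mathcal{S}^+)}.$$
Then I would invoke (\ref{equation:FourierDeLm}), namely $\mathcal{F}(T_mf)=m\widehat{f}$, which shows in particular that $\mathcal{F}(T_mf)\in L^2(\mathcal{S}^+)$; since $\mathcal{F}$ is an isometry of $L^{2,\natural}(G)$ onto $L^2(\mathcal{S}^+)$ (again Theorem \ref{Plancherel}), $T_mf\in L^{2,\natural}(G)$ and $\|T_mf\|_{L^{2,\natural}(G)}=\|\mathcal{F}(T_mf)\|_{L^2(\mathcal{S}^+)}=\|m\widehat{f}\|_{L^2(\mathcal{S}^+)}$. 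Chaining the three displayed facts gives $\|T_mf\|_{L^{2,\natural}(G)}\le \|m\|_{L^\infty(\mathcal{S}^+)}\|f\|_{L^{2,\natural}(G)}$, and taking the supremum over $\|f\|_{L^{2,\natural}(G)}\le 1$ yields $\|T_m\|\le \|m\|_{L^\infty(\mathcal{S}^+)}$.

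The only genuinely delicate point is making sure $T_mf$ is well-defined as an element of $L^{2,\natural}(G)$ in the first place, since the defining integral $(1)$ is a priori only a ``formal expression'': strictly, one should define $T_mf$ via $T_mf:=\mathcal{F}^{-1}(m\widehat{f})$ using the $L^2$-inverse spherical Fourier transform, and then observe that this agrees with $(1)$ whenever the latter converges. Given the setup already established in the paper — in particular that $\mathcal{F}$ is a surjective isometry onto $L^2(\mathcal{S}^+)$ and the identity $(2)$ — this is a routine matter and not a real obstacle; the rest of the argument is just the pointwise bound for multiplication operators combined with the Plancherel isometry.
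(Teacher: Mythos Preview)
Your argument is correct and is essentially identical to the paper's own proof: both use the Plancherel isometry (Theorem~\ref{Plancherel}) together with (\ref{equation:FourierDeLm}) to reduce to the trivial bound $\|m\widehat{f}\|_{L^2(\mathcal{S}^+)}\le \|m\|_{L^\infty(\mathcal{S}^+)}\|\widehat{f}\|_{L^2(\mathcal{S}^+)}$. Your extra remark about interpreting $T_mf$ as $\mathcal{F}^{-1}(m\widehat{f})$ to make the formal definition rigorous is a welcome clarification that the paper leaves implicit.
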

\begin{proof}
%We have 
%\begin{equation*}
%T_mf (x)=\displaystyle\int_{\mathcal{S}^+}m(\varphi)\widehat{f}(\varphi)\varphi (x)d\mu (\varphi)=\mathcal{F}^{-1}(m\widehat{f}).
%\end{equation*}
%and
%$$\mathcal{F}(T_mf)=m\widehat{f}.$$
Thanks to the Plancherel theorem (Theorem \ref{Plancherel}),we have 
\begin{align*}
\|T_mf\|^2_{L^{2,\natural}(G)}&= \|\mathcal{F}(T_mf)\|^2_{L^{2}(\mathcal{S}^+)}\\ 
&= \|
m\widehat{f}\|^2_{L^{2}(\mathcal{S}^+)}\\
 &= \displaystyle\int_{\mathcal{S}^+}|m(\varphi)|^2|\widehat{f}(\varphi)|^2d\mu (\varphi)\\ 
 &\le \|m\|^2_{L^\infty(\mathcal{S}^+)}  \|\widehat{f}\|^2_{L^2(\mathcal{S}^+)} \\
 &=\|m\|^2_{L^\infty(\mathcal{S}^+)}  \|f\|^2_{L^{2,\natural}(G)}.
\end{align*}
Thus, $T_m : L^{2,\natural}(G)\longrightarrow L^{2,\natural}(G)$ is bounded and $$\|T_m\|\le \|m\|_{L^\infty(\mathcal{S}^+)}.$$
\end{proof}

\begin{theorem}\label{theorem:boundedL2} 
If  $m\in L^{1}(\mathcal{S}^+)\cap L^{2}(\mathcal{S}^+)$ is such that $\mathcal{F}^{-1}(m)\in L^{1,\natural}(G)$,  then $T_m : L^{2,\natural}(G)\longrightarrow L^{2,\natural}(G)$ is bounded and $$\|T_m\|\le \|\mathcal{F}^{-1}(m)\|_{L^{1,\natural}(G)}.$$
\end{theorem}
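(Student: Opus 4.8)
The plan is to identify $T_m$ with convolution by the fixed function $h:=\mathcal{F}^{-1}(m)\in L^{1,\natural}(G)$, and then to read off the norm bound from the $L^1$--$L^2$ convolution inequality recalled in Subsection \ref{convolution}. The first observation is that since $h\in L^{1,\natural}(G)$, the Riemann--Lebesgue estimate (Theorem \ref{Riemann-Lebesgue}) gives $m=\widehat{h}\in L^\infty(\mathcal{S}^+)$; hence by Theorem \ref{theorem:boundedLinfty} the operator $T_m$ is \emph{a priori} a bounded operator on $L^{2,\natural}(G)$, and by \eqref{equation:FourierDeLm} one has $\mathcal{F}(T_mf)=m\widehat{f}$ for every $f\in L^{2,\natural}(G)$.

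The key step will be to show that $T_mf=h\ast f$ for all $f\in L^{2,\natural}(G)$. For $f\in L^{1,\natural}(G)\cap L^{2,\natural}(G)$ this is immediate: $h\ast f$ is again $K$-bi-invariant, it lies in $L^{2,\natural}(G)$ by the estimate $\|h\ast f\|_{L^2(G)}\le\|h\|_{L^1(G)}\|f\|_{L^2(G)}$, and the convolution property of the spherical Fourier transform yields $\widehat{h\ast f}=\widehat{h}\,\widehat{f}=m\widehat{f}=\mathcal{F}(T_mf)$, so the two functions agree by the $L^2$-injectivity of $\mathcal{F}$ provided by the Plancherel theorem (Theorem \ref{Plancherel}). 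For general $f\in L^{2,\natural}(G)$ I would pick a sequence $(f_n)$ in $L^{1,\natural}(G)\cap L^{2,\natural}(G)$ with $f_n\to f$ in $L^{2}$; then $h\ast f_n\to h\ast f$ in $L^2$ by the same convolution estimate, while $T_mf_n\to T_mf$ in $L^2$ by the boundedness noted above, and since $T_mf_n=h\ast f_n$ for each $n$ the identity $T_mf=h\ast f$ passes to the limit.

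Once this identification is in hand, the conclusion is a one-line consequence: applying $\|h\ast f\|_{L^2(G)}\le\|h\|_{L^1(G)}\|f\|_{L^2(G)}$ with $h=\mathcal{F}^{-1}(m)$ gives
$$\|T_mf\|_{L^{2,\natural}(G)}=\|h\ast f\|_{L^2(G)}\le\|\mathcal{F}^{-1}(m)\|_{L^{1,\natural}(G)}\,\|f\|_{L^{2,\natural}(G)},$$
which is exactly the assertion.

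I expect the main obstacle to be the passage from the dense subspace $L^{1,\natural}(G)\cap L^{2,\natural}(G)$ to all of $L^{2,\natural}(G)$ in the identity $T_mf=h\ast f$: this requires simultaneously the $L^2$-continuity of convolution by the fixed $L^1$ function $h$ and the injectivity of the spherical Fourier transform on $L^{2,\natural}(G)$. Everything else reduces to routine bookkeeping with the facts assembled in Section \ref{Preliminaries}.
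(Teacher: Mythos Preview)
Your proof is correct and follows essentially the same approach as the paper: both arguments hinge on the identity $\mathcal{F}(T_mf)=m\widehat{f}=\widehat{h}\,\widehat{f}=\mathcal{F}(h\ast f)$ with $h=\mathcal{F}^{-1}(m)$, and then invoke the $L^1$--$L^2$ convolution inequality. The only cosmetic difference is that the paper packages this via the duality estimate $|\langle T_mf,g\rangle|\le\|m\widehat{f}\|_{L^2}\|\widehat{g}\|_{L^2}$ and Plancherel, while you identify $T_mf=h\ast f$ directly and are more explicit about the density argument extending from $L^{1,\natural}\cap L^{2,\natural}$ to all of $L^{2,\natural}$.
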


\begin{proof}
For  $f,g\in L^{2,\natural}(G)$, we know from Corollary \ref{Plancherelscalar} that 
$$\langle T_mf,g\rangle_{L^{2,\natural}(G)}= \langle \mathcal{F}(T_mf),  \widehat{g}\rangle_{L^{2}(\mathcal{S}^+)}= \langle m\widehat{f},  \widehat{g}\rangle_{L^{2}(\mathcal{S}^+)}.$$
Now, using  the Cauchy-Schwarz inequality and  some properties  of the convolution product (Subsection \ref{convolution}), we have
\begin{align*}
|\langle T_mf,g\rangle_{L^{2,\natural}(G)}|& =|\langle m\widehat{f},  \widehat{g}\rangle_{L^{2}(\mathcal{S}^+)}|\\
&\le \| m\widehat{f}\|_{L^{2}(\mathcal{S}^+)} \|\widehat{g}\|_{L^{2}(\mathcal{S}^+)}\\
&= \| m\widehat{f}\|_{L^{2}(\mathcal{S}^+)} \|g\|_{L^{2,\natural}(G)}\\
&\le \| \mathcal{F}^{-1}(m)\ast f\|_{L^{2,\natural}(G)} \|g\|_{L^{2,\natural}(G)}\\
&\le \| \mathcal{F}^{-1}(m)\|_{L^{1,\natural}(G)} \|f\|_{L^{2,\natural}(G)} \|g\|_{L^{2,\natural}(G)}.
\end{align*}
It follows that 
$T_m : L^{2,\natural}(G)\longrightarrow L^{2,\natural}(G)$ is bounded and $$\|T_m\|\le \|\mathcal{F}^{-1}(m)\|_{L^{1,\natural}(G)}.$$
\end{proof}

\begin{remark}{\rm
We proved in 
\begin{enumerate}
\item Theorem \ref{theorem:boundedL1} that if $m\in L^{1}(\mathcal{S}^+)$, then  the operator $T_m : L^{1,\natural}(G)\longrightarrow L^{\infty,\natural}(G)$ is bounded and $$\|T_m\|\le \|m\|_{L^{1}(\mathcal{S}^+)}.$$
\item Theorem \ref{theorem:boundedL2} that if $m\in L^{1}(\mathcal{S}^+)\cap L^{2}(\mathcal{S}^+)$ is such that $\mathcal{F}^{-1}(m)\in L^{1,\natural}(G)$, then the operator  $T_m : L^{2,\natural}(G)\longrightarrow L^{2,\natural}(G)$ is bounded and $$\|T_m\|\le \|\mathcal{F}^{-1}(m)\|_{L^{1,\natural}(G)}.$$
\end{enumerate}
}\end{remark}

We  extended the result for $1<p<2$.
\begin{theorem}\label{theorem:interpolationmdansL1}
Let  $m\in L^{1}(\mathcal{S}^+)\cap L^{2}(\mathcal{S}^+)$ such that $\mathcal{F}^{-1}(m)\in L^{1,\natural}(G).$ 
If $1<p<2$, then $T_m : L^{p,\natural}(G)\longrightarrow L^{q,\natural}(G)$ is bounded and $$\|T_m\|\le \|m\|_{L^{1}(\mathcal{S}^+)}^{\frac{2-p}{p}}\|\mathcal{F}^{-1}(m)\|_{L^{1,\natural}(G)}^{\frac{2p-2}{p}}$$
where $q$ is such that  $\displaystyle\frac{1}{p}+\frac{1}{q}=1$.  
\end{theorem}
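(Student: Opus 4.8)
The plan is to obtain this by complex (Riesz--Thorin) interpolation between the two endpoint estimates already in hand: Theorem \ref{theorem:boundedL1}, which gives $T_m : L^{1,\natural}(G)\to L^{\infty,\natural}(G)$ with norm $M_0:=\|m\|_{L^{1}(\mathcal{S}^+)}$, and Theorem \ref{theorem:boundedL2}, which gives $T_m : L^{2,\natural}(G)\to L^{2,\natural}(G)$ with norm $M_1:=\|\mathcal{F}^{-1}(m)\|_{L^{1,\natural}(G)}$ (both hypotheses on $m$ are exactly those assumed here). First I would fix a common dense domain on which $T_m$ is unambiguously defined, for instance $\mathcal{C}_c^\natural(G)$, which is dense in $L^{r,\natural}(G)$ for every $1\le r<\infty$; there $T_m$ coincides with the operators of the two cited theorems. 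Since $\widehat{f}$ is a function on $\mathcal{S}^+$ and $T_mf$ is built from spherical functions (themselves $K$-bi-invariant) via $(1)$, $T_m$ indeed sends $K$-bi-invariant functions to $K$-bi-invariant functions, so it makes sense to view it as a single linear map on $L^{1,\natural}(G)+L^{2,\natural}(G)$.

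Next I would choose the interpolation parameter. Taking $(p_0,q_0)=(1,\infty)$ and $(p_1,q_1)=(2,2)$ in Theorem \ref{Riesz-Thorin}, for $0<\theta<1$ the relations read $\frac1p=\frac{1-\theta}{1}+\frac{\theta}{2}=1-\frac{\theta}{2}$ and $\frac1q=\frac{1-\theta}{\infty}+\frac{\theta}{2}=\frac{\theta}{2}$, so $\frac1p+\frac1q=1$ holds automatically. Solving $\frac1p=1-\frac\theta2$ gives $\theta=\frac{2p-2}{p}$ and $1-\theta=\frac{2-p}{p}$, and as $p$ ranges over $(1,2)$ the parameter $\theta$ ranges over $(0,1)$, so every such $p$ is reached. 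Riesz--Thorin then yields a bounded operator $T_m : L^{p,\natural}(G)\to L^{q,\natural}(G)$ with $\|T_m\|\le M_0^{1-\theta}M_1^{\theta}=\|m\|_{L^{1}(\mathcal{S}^+)}^{\frac{2-p}{p}}\,\|\mathcal{F}^{-1}(m)\|_{L^{1,\natural}(G)}^{\frac{2p-2}{p}}$, which is precisely the claimed bound; the $L^{q,\natural}$-valuedness is inherited from the bi-invariance noted above.

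The one point that needs a word of justification — the main (though minor) obstacle — is that Theorem \ref{Riesz-Thorin} is phrased for full Lebesgue spaces $L^{p}(\mu)\to L^{q}(\nu)$, while we are working with the $K$-bi-invariant subspaces $L^{r,\natural}(G)$. This is harmless because $L^{r,\natural}(G)$ is a norm-one complemented subspace of $L^{r}(G)$, the projection being the two-sided averaging $Pf(x)=\int_K\int_K f(k_1xk_2)\,dk_1\,dk_2$, which is a contraction on every $L^{r}(G)$ simultaneously. Hence one may apply Riesz--Thorin to $P\circ\widetilde{T_m}$ for any bounded extension $\widetilde{T_m}$ of $T_m$ to the full spaces (equivalently, invoke that complex interpolation commutes with norm-one retractions, giving $[L^{1,\natural}(G),L^{2,\natural}(G)]_\theta=L^{p,\natural}(G)$ for $\frac1p=1-\frac\theta2$), and the interpolated estimate descends to the bi-invariant spaces, completing the proof.
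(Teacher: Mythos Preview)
Your proof is correct and follows essentially the same approach as the paper: apply Riesz--Thorin interpolation between the endpoints $(p_0,q_0)=(1,\infty)$ and $(p_1,q_1)=(2,2)$ furnished by Theorems \ref{theorem:boundedL1} and \ref{theorem:boundedL2}, and read off the exponents $1-\theta=\frac{2-p}{p}$, $\theta=\frac{2p-2}{p}$. You are in fact more careful than the paper's proof, which simply invokes Theorem \ref{Riesz-Thorin} without discussing the restriction to the $K$-bi-invariant subspaces or the common dense domain; your remark about the averaging projection $P$ and the identification $[L^{1,\natural}(G),L^{2,\natural}(G)]_\theta=L^{p,\natural}(G)$ fills that gap.
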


\begin{proof}
We apply the Riesz-Thorin interpolation theorem  with $p_0=1, p_1=2, q_0=\infty, q_1=2$. For $0<\theta<1$, we have  
$\displaystyle\frac{1}{p} = \frac{2-\theta}{2}$ and $\displaystyle\frac{1}{q} = \frac{\theta}{2}$. Therefore,  $1<p<2$ and $\displaystyle\frac{1}{p}+\frac{1}{q}=1$. The result follows.  
\end{proof}

%%%%%%%%%%%%%%%%%%%%%%%%%%%%%%%%%%%%%%%%%%%%%%%%%%%%%%%%%%%%%%%%%
\section{The spherical Fourier multipliers and the Schatten-von Neumann classes}\label{The spherical Fourier multiplier operators and the Schatten-von Neumann classes}
%%%%%%%%%%%%%%%%%%%%%%%%%%%%%%%%%%%%%%%%%%%%%%%%%%%%%%%%%%%%%%%%%

In this section, we assume that  $G$ is a compact group with a normalized Haar measure. We still assume that there is a compact subgroup of $G$ such that $(G,K)$ is a Gelfand pair.  Since $G$ is compact, the space $\mathcal{S}^+$ is discret. For this reason, we replace the notation $L^1(\mathcal{S}^+)$ by its discrete version $\ell^1(\mathcal{S}^+)$ and integration on  $\mathcal{S}^+$ is replace by discrete summation.

 The bounded spherical functions on the compact $G$ are square integrable. Therefore, if $f\in L^{2,\natural}(G)$, then the Fourier transform of $f$ is written 
 $$\widehat{f}(\varphi)=\displaystyle\int_G f(x)\varphi(x^{-1})dx=\displaystyle\int_G f(x)\overline{\varphi(x)}dx=\langle f,\varphi\rangle_{L^{2,\natural}(G)}.$$
  Moreover, the expression of the spherical Fourier multiplier  $T_m$  becomes 
 \begin{equation}
T_mf (x)=\sum_{\varphi\in \mathcal{S}^+}m(\varphi)\widehat{f}(\varphi)\varphi (x).
\end{equation}
 
 \begin{theorem}\label{theorem:fourierdansLp}
 If $f\in L^{2,\natural}(G)$, then $\forall q\ge 2,\, \widehat{f}\in \ell^q (\mathcal{S}^+)$ with $\|\widehat{f}\|_{\ell^q(\mathcal{S}^+)}\le \|f\|_{L^{2,\natural}(G)}$.
\end{theorem}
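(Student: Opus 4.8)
The plan is to combine the Plancherel identity for the spherical Fourier transform (Theorem \ref{Plancherel}) with the Riemann--Lebesgue-type bound (Theorem \ref{Riemann-Lebesgue}), and then interpolate between the endpoints $q=2$ and $q=\infty$. First I would record the two endpoint estimates. For $q=2$, Theorem \ref{Plancherel} gives immediately $\|\widehat f\|_{\ell^2(\mathcal S^+)}=\|f\|_{L^{2,\natural}(G)}$, since in the compact setting $L^2(\mathcal S^+)=\ell^2(\mathcal S^+)$. For the other endpoint I would use that $G$ is compact with normalized Haar measure, so $L^{2,\natural}(G)\subset L^{1,\natural}(G)$ with $\|f\|_{L^{1,\natural}(G)}\le\|f\|_{L^{2,\natural}(G)}$; hence Theorem \ref{Riemann-Lebesgue} yields $\|\widehat f\|_{\ell^\infty(\mathcal S^+)}\le\|f\|_{L^{1,\natural}(G)}\le\|f\|_{L^{2,\natural}(G)}$.

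Next I would interpolate. Writing $s_q:=\|\widehat f\|_{\ell^q(\mathcal S^+)}$, the standard log-convexity of $\ell^q$-norms (or a direct application of Hölder's inequality) gives, for $2\le q<\infty$ with $\tfrac1q=\tfrac{1-\theta}{2}+\tfrac{\theta}{\infty}=\tfrac{1-\theta}{2}$, i.e. $\theta=1-\tfrac2q$,
\[
\|\widehat f\|_{\ell^q(\mathcal S^+)}\le \|\widehat f\|_{\ell^2(\mathcal S^+)}^{1-\theta}\,\|\widehat f\|_{\ell^\infty(\mathcal S^+)}^{\theta}\le \|f\|_{L^{2,\natural}(G)}^{1-\theta}\,\|f\|_{L^{2,\natural}(G)}^{\theta}=\|f\|_{L^{2,\natural}(G)}.
\]
Concretely, $\sum_\varphi|\widehat f(\varphi)|^q=\sum_\varphi|\widehat f(\varphi)|^2\,|\widehat f(\varphi)|^{q-2}\le \|\widehat f\|_{\ell^\infty}^{q-2}\sum_\varphi|\widehat f(\varphi)|^2=\|\widehat f\|_{\ell^\infty}^{q-2}\|f\|_{L^{2,\natural}(G)}^2$, and then $\|\widehat f\|_{\ell^\infty}\le\|f\|_{L^{2,\natural}(G)}$ finishes it after taking $q$-th roots. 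For $q=\infty$ the claim is exactly the endpoint bound already noted. Alternatively one can phrase the interpolation step as an invocation of the Riesz--Thorin theorem (Theorem \ref{Riesz-Thorin}) applied to the identity-type map $f\mapsto\widehat f$ between $L^{2,\natural}(G)\to\ell^2(\mathcal S^+)$ and $L^{1,\natural}(G)\to\ell^\infty(\mathcal S^+)$, but since we only need the single input exponent $p=2$, the elementary Hölder argument is cleaner.

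I do not anticipate a serious obstacle here; the only points requiring a word of care are (i) that compactness of $G$ with normalized Haar measure is what makes the inclusion $L^{2,\natural}(G)\hookrightarrow L^{1,\natural}(G)$ hold with constant $1$ (this fails for non-compact $G$, which is why the standing assumption of the section is invoked), and (ii) that discreteness of $\mathcal S^+$ is what legitimizes identifying $L^2(\mathcal S^+)$ with $\ell^2(\mathcal S^+)$ and writing the norms as sums. Both are guaranteed by the hypotheses in force at the start of the section, so the proof is short.
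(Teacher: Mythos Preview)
Your proof is correct and follows essentially the same route as the paper: establish the $q=2$ endpoint via Plancherel, the $q=\infty$ endpoint via the inclusion $L^{2,\natural}(G)\hookrightarrow L^{1,\natural}(G)$ (from compactness with normalized Haar measure) together with the Riemann--Lebesgue bound, and then interpolate. The paper phrases the interpolation as an application of Riesz--Thorin to $\mathcal{F}:L^{2,\natural}(G)\to\ell^2(\mathcal{S}^+)$ and $\mathcal{F}:L^{2,\natural}(G)\to\ell^\infty(\mathcal{S}^+)$, whereas you give the equivalent elementary H\"older/log-convexity argument; both are valid and yield the same constant.
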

\begin{proof}
Since $G$ is compact, $L^{2,\natural}(G)\subset L^{1,\natural}(G)$ with $\|f\|_{L^{1,\natural}(G)}\le \|f\|_{L^{2,\natural}(G)}$ for all  $f\in  L^{1,\natural}(G)$.
It is well-known that $\widehat{f}$ is a sequence  (indexed by $\mathcal{S}^+$)  which tends  to zero at infinity. Therefore, $\widehat{f}$  is bounded,  that is $\widehat{f}\in \ell^{\infty}(\mathcal{S}^+)$. Moreover, $\|\widehat{f}\|_{\ell^\infty(\mathcal{S}^+)}\le \|f\|_{L^{1,\natural}(G)}$. Therefore,  $\|\widehat{f}\|_{\ell^\infty(\mathcal{S}^+)}\le \|f\|_{L^{2,\natural}(G)}$. So,  the Fourier transform $\mathcal{F}$ is a bounded linear operator from $L^{2,\natural}(G)$ into $\ell^{\infty}(\mathcal{S}^+)$.  Moreover, the Fourier transform $\mathcal{F}$ is a linear isometry from $L^{2,\natural}(G)$ onto $\ell^2(\mathcal{S}^+)$. Applying the Riesz-Thorin interpolation theorem (Theorem \ref{Riesz-Thorin}), we obtain that $\forall q\geq 2, \, \widehat{f}\in \ell^q(\mathcal{S}^+)$ with $\|\widehat{f}\|_{\ell^q(\mathcal{S}^+)}\le \|f\|_{L^{2,\natural}(G)}$.
\end{proof}

\begin{theorem}\label{theorem:mDansLpetLmborne}
If  $m\in \ell^{p}(\mathcal{S}^+),\,1\le p\le  \infty$,  then $T_m : L^{2,\natural}(G)\longrightarrow L^{2,\natural}(G)$ is bounded and $$\|T_m\|\le \|m\|_{\ell^p(\mathcal{S}^+)}.$$
\end{theorem}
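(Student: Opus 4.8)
The plan is to reduce the statement to the case $p=\infty$, which is precisely Theorem \ref{theorem:boundedLinfty}, by exploiting the nesting of the spaces $\ell^p(\mathcal{S}^+)$ available on the discrete index set $\mathcal{S}^+$. Concretely, I would show that every $m\in\ell^p(\mathcal{S}^+)$ automatically lies in $\ell^\infty(\mathcal{S}^+)=L^\infty(\mathcal{S}^+)$ with
$$\|m\|_{L^\infty(\mathcal{S}^+)}=\sup_{\varphi\in\mathcal{S}^+}|m(\varphi)|\le\|m\|_{\ell^p(\mathcal{S}^+)},$$
and then quote Theorem \ref{theorem:boundedLinfty} to obtain boundedness of $T_m$ on $L^{2,\natural}(G)$ together with $\|T_m\|\le\|m\|_{L^\infty(\mathcal{S}^+)}\le\|m\|_{\ell^p(\mathcal{S}^+)}$.

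The one point to check carefully is the displayed inequality. Here I would recall that, $G$ being compact, $\mathcal{S}^+$ is discrete and the Plancherel measure $\mu$ is a sum of point masses of full support, so that $L^\infty(\mathcal{S}^+)$ is simply the space of bounded functions with the supremum norm. Moreover, evaluating the Plancherel identity of Theorem \ref{Plancherel} on a fixed spherical function $\varphi$ — whose spherical Fourier transform is supported at $\varphi$ alone, since distinct spherical functions are orthogonal in $L^{2,\natural}(G)$ — yields $\mu(\{\varphi\})=\|\varphi\|_{L^{2,\natural}(G)}^{-2}$, and since $|\varphi|\le\varphi(e)=1$ on the probability space $G$ we get $\|\varphi\|_{L^{2,\natural}(G)}\le 1$, hence $\mu(\{\varphi\})\ge 1$ for every $\varphi\in\mathcal{S}^+$. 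Therefore each term of $\sum_{\varphi\in\mathcal{S}^+}|m(\varphi)|^p\,\mu(\{\varphi\})$ dominates $|m(\varphi)|^p$, which forces $\sup_{\varphi}|m(\varphi)|^p\le\|m\|_{\ell^p(\mathcal{S}^+)}^p$ and gives the claim.

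I do not expect any genuine obstacle: the content of the theorem is carried entirely by Theorem \ref{theorem:boundedLinfty}, and what remains is an elementary comparison of sequence norms together with a small amount of bookkeeping about the Plancherel measure in the compact setting. If a self-contained proof is preferred, one can instead repeat the computation from the proof of Theorem \ref{theorem:boundedLinfty}: by Theorem \ref{Plancherel}, $\|T_mf\|_{L^{2,\natural}(G)}^2=\sum_{\varphi\in\mathcal{S}^+}|m(\varphi)|^2|\widehat{f}(\varphi)|^2\mu(\{\varphi\})\le\|m\|_{\ell^\infty(\mathcal{S}^+)}^2\,\|f\|_{L^{2,\natural}(G)}^2$, and then dominate $\|m\|_{\ell^\infty(\mathcal{S}^+)}$ by $\|m\|_{\ell^p(\mathcal{S}^+)}$ exactly as above.
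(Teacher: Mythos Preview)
Your argument is correct, but it is genuinely different from the paper's proof. The paper does \emph{not} reduce to the case $p=\infty$; instead it estimates the bilinear form
\[
|\langle T_m f,g\rangle_{L^{2,\natural}(G)}|
=\Bigl|\sum_{\varphi\in\mathcal{S}^+} m(\varphi)\,\widehat{f}(\varphi)\,\overline{\widehat{g}(\varphi)}\Bigr|
\le \sum_{\varphi\in\mathcal{S}^+} |m(\varphi)|\,|\widehat{f}(\varphi)|\,|\widehat{g}(\varphi)|
\]
by the generalized H\"older inequality with exponents $\tfrac{1}{p}+\tfrac{1}{q}+\tfrac{1}{q}=1$, where $q=\tfrac{2p}{p-1}\ge 2$, and then invokes the auxiliary Theorem~\ref{theorem:fourierdansLp} (itself proved by Riesz--Thorin interpolation between $\ell^2$ and $\ell^\infty$) to bound $\|\widehat{f}\|_{\ell^q}$ and $\|\widehat{g}\|_{\ell^q}$ by $\|f\|_{L^{2,\natural}(G)}$ and $\|g\|_{L^{2,\natural}(G)}$.

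Your route is strictly more elementary: the single inequality $\|m\|_{\ell^\infty(\mathcal{S}^+)}\le\|m\|_{\ell^p(\mathcal{S}^+)}$, followed by a direct appeal to Theorem~\ref{theorem:boundedLinfty}, bypasses both the three-term H\"older inequality and the interpolation argument behind Theorem~\ref{theorem:fourierdansLp}. You are also more careful than the paper about the Plancherel weights, verifying $\mu(\{\varphi\})=\|\varphi\|_{L^{2,\natural}(G)}^{-2}\ge 1$ so that the $\ell^p\hookrightarrow\ell^\infty$ inclusion holds with constant~$1$ regardless of whether the paper's $\ell^p$ carries counting measure or the Plancherel measure. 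The paper's approach, on the other hand, keeps the symmetry between $f$ and $g$ explicit and makes Theorem~\ref{theorem:fourierdansLp} available as an independent statement, though neither feature is needed to prove the present theorem.
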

\begin{proof}
Let  $f,g\in L^{2,\natural}(G)$.  We have 
$$\langle T_mf,g\rangle_{L^{2,\natural}(G)}= \langle \mathcal{F}(T_mf),  \widehat{g}\rangle_{\ell^2(\mathcal{S}^+)}= \langle m\widehat{f},  \widehat{g}\rangle_{\ell^2(\mathcal{S}^+)}.$$
Therefore,
\begin{align*}
|\langle T_mf,g\rangle_{L^{2,\natural}(G)}| 
&= |\langle m\widehat{f},  \widehat{g}\rangle_{\ell^2(\mathcal{S}^+)}|\\
&= \left|\sum_{\varphi\in \mathcal{S}^+}m(\varphi)\widehat{f}(\varphi)\overline{\widehat{g}(\varphi)}\right|\\
&\le \sum_{\varphi\in \mathcal{S}^+}|m(\varphi)||\widehat{f}(\varphi)||\widehat{g}(\varphi)|.
\end{align*}
Now, applying the generalized H\"older inequality (discrete version) with $q=\displaystyle\frac{2p}{p-1}$ so that $q\ge 2$ and $\displaystyle\frac{1}{p}+\frac{1}{q}+\frac{1}{q}=1$, we obtain
\begin{align*}
|\langle T_mf,g\rangle_{L^{2,\natural}(G)}| &\le \|m\|_{\ell^p(\mathcal{S}^+)}\| \widehat{f}\|_{\ell^q(\mathcal{S}^+)} \|\widehat{g}\|_{\ell^q(\mathcal{S}^+)}\\
&\le\|m\|_{\ell^p(\mathcal{S}^+)} \|f\|_{L^{2,\natural}(G)} \|g\|_{L^{2,\natural}(G)}\,\mbox{(Theorem \ref{theorem:fourierdansLp}}).
\end{align*}
It follows that $T_m : L^{2,\natural}(G)\longrightarrow L^{2,\natural}(G)$ is bounded and $$\|T_m\|\le \|m\|_{\ell^p(\mathcal{S}^+)}.$$
\end{proof}

\begin{theorem}\label{TmTrace-class}
If $m\in \ell^1(S^+)$, then $T_m : L^{2,\natural}(G)\longrightarrow L^{2,\natural}(G)$ is in the  trace class $S_1(L^{2,\natural}(G))$. Moreover, 
 $\|T_m\|_{S_1(L^{2,\natural}(G))}\le 4\|m\|_{\ell^1(\mathcal{S}^+)}$.
\end{theorem}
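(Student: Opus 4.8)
The key is to recognize that $T_m$ with $m \in \ell^1(\mathcal{S}^+)$ should be writeable, essentially, as a (norm-convergent) sum of rank-one-like pieces indexed by $\varphi \in \mathcal{S}^+$, and to use the trace-class criterion quoted just before Subsection 2.4 (Proposition 2.4 of \cite{Wong}): a positive operator $T$ with $\sum_k \langle Te_k,e_k\rangle < \infty$ for every orthonormal basis lies in $S_1$. The natural strategy is: first reduce to the case $m \ge 0$ by a decomposition argument, then handle that case directly with the criterion, and finally reassemble, the factor $4$ being exactly the price of splitting a complex-valued $m$ into four nonnegative pieces.

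\textbf{Step 1: decompose $m$.} Write $m = (m_1 - m_2) + i(m_3 - m_4)$ where each $m_j \ge 0$ is in $\ell^1(\mathcal{S}^+)$ with $\|m_j\|_{\ell^1} \le \|m\|_{\ell^1}$ (the positive/negative parts of the real and imaginary parts). By linearity of $m \mapsto T_m$ one gets $T_m = T_{m_1} - T_{m_2} + i T_{m_3} - i T_{m_4}$, so once each $T_{m_j}$ is shown to be in $S_1$ with $\|T_{m_j}\|_{S_1} \le \|m_j\|_{\ell^1} \le \|m\|_{\ell^1}$, the triangle inequality in $S_1(L^{2,\natural}(G))$ gives $\|T_m\|_{S_1} \le 4\|m\|_{\ell^1(\mathcal{S}^+)}$.

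\textbf{Step 2: the nonnegative case.} Suppose $m \ge 0$. From the compact-group formula $\widehat f(\varphi) = \langle f,\varphi\rangle_{L^{2,\natural}(G)}$ and $T_m f = \sum_\varphi m(\varphi)\widehat f(\varphi)\varphi$, the operator $T_m$ is the "diagonal" operator $f \mapsto \sum_{\varphi} m(\varphi)\langle f,\varphi\rangle\,\varphi$. Since the bounded spherical functions on a compact group that are positive-definite form, after normalization, an orthonormal family in $L^{2,\natural}(G)$ (they are distinct characters of the commutative convolution algebra, hence orthogonal by Corollary \ref{Plancherelscalar}-type reasoning, and $\|\varphi\|_{L^2} = 1$ can be arranged / is part of the Plancherel setup), $T_m$ is manifestly self-adjoint and positive: $\langle T_m f, f\rangle = \sum_\varphi m(\varphi)|\langle f,\varphi\rangle|^2 \ge 0$. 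Now for any orthonormal basis $\{e_k\}$ of $L^{2,\natural}(G)$, expand
\[
\sum_k \langle T_m e_k, e_k\rangle = \sum_k \sum_{\varphi} m(\varphi)|\langle e_k,\varphi\rangle|^2 = \sum_\varphi m(\varphi)\sum_k |\langle e_k,\varphi\rangle|^2 = \sum_\varphi m(\varphi)\|\varphi\|_{L^2}^2 = \|m\|_{\ell^1(\mathcal{S}^+)},
\]
where the interchange of the two nonnegative sums is Tonelli and the inner sum is Parseval. By the quoted Proposition, $T_m \in S_1(L^{2,\natural}(G))$; moreover for a positive trace-class operator $\|T_m\|_{S_1} = \mathrm{tr}(T_m) = \sum_k\langle T_m e_k,e_k\rangle = \|m\|_{\ell^1(\mathcal{S}^+)}$.

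\textbf{Step 3: reassemble.} Combining Steps 1 and 2 yields $\|T_m\|_{S_1(L^{2,\natural}(G))} \le \sum_{j=1}^4 \|T_{m_j}\|_{S_1} = \sum_{j=1}^4 \|m_j\|_{\ell^1} \le 4\|m\|_{\ell^1(\mathcal{S}^+)}$, as claimed.

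\textbf{Main obstacle.} The delicate point is Step 2's identification of $T_m$ as a genuinely diagonal (hence positive) operator — specifically, justifying that $\{\varphi : \varphi \in \mathcal{S}^+\}$ is an orthonormal set in $L^{2,\natural}(G)$ and that the series defining $T_m$ converges in the operator (indeed Hilbert–Schmidt) norm so that the termwise manipulations and the interchange of summations are legitimate. The orthonormality should follow from the Plancherel theorem (Theorem \ref{Plancherel}) together with the fact that the spherical Fourier transform sends $L^{2,\natural}(G)$ isometrically onto $\ell^2(\mathcal{S}^+)$, under which $\varphi$ corresponds to the indicator at $\varphi$; I would spell this out, or alternatively bypass positivity of $T_m$ itself by applying the criterion to $|T_m|$ after first establishing $T_m \in S_2 \subset$ compact via $\sum_\varphi |m(\varphi)|^2 < \infty$ (which holds since $\ell^1 \subset \ell^2$), and then noting $|T_{m}| = T_{|m|}$ for the diagonal operator, reducing again to the nonnegative computation above. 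Either route keeps the constant $4$ coming solely from the four-term splitting of $m$.
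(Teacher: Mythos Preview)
Your proposal is correct and follows essentially the same route as the paper: reduce to nonnegative $m$, verify positivity of $T_m$, compute $\sum_k\langle T_m e_k,e_k\rangle$ via Parseval, and reassemble via the real/imaginary and positive/negative decomposition to pick up the factor $4$. The paper carries out the two-stage splitting (real, then nonnegative) rather than going straight to four pieces, but this is cosmetic.

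One useful remark on the point you flag as the main obstacle: the paper does \emph{not} invoke orthonormality of $\{\varphi\}$ in $L^{2,\natural}(G)$, nor the equality $\|\varphi\|_{L^{2,\natural}(G)}=1$. It only uses the inequality $\|\varphi\|_{L^{2,\natural}(G)}^2=\int_G|\varphi(x)|^2\,dx\le 1$, which follows immediately from $|\varphi(x)|\le\varphi(e)=1$ and the normalization of Haar measure on the compact group $G$. With this, Parseval gives $\sum_k|\langle e_k,\varphi\rangle|^2=\|\varphi\|^2\le 1$ and hence $\sum_k\langle T_m e_k,e_k\rangle\le\|m\|_{\ell^1}$, which is all that is needed; the positivity $\langle T_m f,f\rangle=\sum_\varphi m(\varphi)|\widehat f(\varphi)|^2\ge 0$ likewise requires no orthogonality, only the Plancherel identity $\langle T_m f,f\rangle=\langle m\widehat f,\widehat f\rangle_{\ell^2(\mathcal{S}^+)}$. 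So your worry about justifying orthonormality can be bypassed entirely, and your equality $\|T_m\|_{S_1}=\|m\|_{\ell^1}$ in Step~2 should be weakened to $\le$, which is harmless for the final bound.
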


\begin{proof}

\begin{itemize}
\item Let us assume first that $m$ takes nonnegative values. Let $f\in L^{2,\natural}(G)$. Then, 
$$\langle T_mf,f \rangle_{L^{2,\natural}(G)}=\langle m\widehat{f},\widehat{f} \rangle_{L^{2}(\mathcal{S}^+)}=\sum_{\varphi\in \mathcal{S}^+}m(\varphi)\widehat{f}(\varphi)\overline{\widehat{f}(\varphi)}=\sum_{\varphi\in \mathcal{S}^+}m(\varphi)|\widehat{f}(\varphi)|^2\ge 0.$$
Thus, $T_m$ is a positive operator. 
Let $(e_n)_{n\ge 1}$ be an orthonormal basis of $L^{2,\natural}(G)$. Then, 
\begin{align*}
\langle T_me_n,e_n\rangle_{L^{2,\natural}(G)}
&= \displaystyle\int_{G}T_me_n(x)\overline{e_n(x)}dx \\
&= \displaystyle\int_{G}\sum_{\varphi\in \mathcal{S}^+}m(\varphi)\widehat{e_n}(\varphi)\varphi(x)\overline{e_n(x)}dx.
\end{align*}
We want to apply the Dominated convergence Theorem. 

On one hand, we have
\begin{align*}
|m(\varphi)\widehat{e_n}(\varphi)\varphi(x)|&=|m(\varphi)||\langle e_n,\varphi\rangle||\varphi(x)|\\
&\le |m(\varphi)|\|e_n\|_{L^{2,\natural}(G)} \|\varphi\|_{L^{2,\natural}(G)} |\varphi(x)|\, (\mbox{Cauchy-Schwarz})\\
&\le m(\varphi). 
\end{align*}
because $\|e_n\|_{L^{2,\natural}(G)}=1$,  $\forall x\in G,  |\varphi (x)|\le 1$ and $\|\varphi\|_{L^{2,\natural}(G)}^2=\displaystyle\int_{G}|\varphi(x)|^2dx\le 1$. Since $m\in \ell^1(\mathcal{S}^+)$, then the series $\sum\limits_{\varphi\in \mathcal{S}^+}m(\varphi)$ converges.

On the other hand,
let $F$ be a finite subset of $\mathcal{S}^+$. We have 
\begin{align*}
\left|\sum_{\varphi\in F}m(\varphi)\widehat{e}(\varphi)\varphi (x)\right|&\le \sum_{\varphi\in F}|m(\varphi)||\widehat{e_n}(\varphi)||\varphi (x)|\\
&=\sum_{\varphi\in F}|m(\varphi)||\langle e_n, \varphi\rangle||\varphi (x)|\\
&\le \sum_{\varphi\in F}|m(\varphi)| \|e_n\|_{L^{2,\natural}(G)} \|\varphi\|_{L^{2,\natural}(G)}|\varphi (x)|\\
&\le \|m\|_{\ell^1(\mathcal{S}^+)}.
\end{align*}
Since $G$ is compact, the constant function $x\longmapsto \|m\|_{\ell^1(\mathcal{S}^+)}$ defined on $G$ is integrable. Now, we apply the Dominated Convergence Theorem to obtain 
\begin{align*}
\langle T_me_n,e_n\rangle &=\sum_{\varphi\in \mathcal{S}^+}\displaystyle\int_{G}m(\varphi)\widehat{e_n}(\varphi)\varphi(x)\overline{e_n(x)}dx\\
&=\sum_{\varphi\in \mathcal{S}^+}m(\varphi)\widehat{e_n}(\varphi)\displaystyle\int_{G} \varphi(x)\overline{e_n(x)}dx \\
&= \sum_{\varphi\in \mathcal{S}^+}m(\varphi)\widehat{e_n}(\varphi)\langle\varphi, e_n\rangle \\
&= \sum_{\varphi\in \mathcal{S}^+}m(\varphi)\langle e_n,\varphi\rangle\langle\varphi, e_n\rangle\\
&= \sum_{\varphi\in \mathcal{S}^+}m(\varphi)|\langle\varphi, e_n\rangle|^2.
\end{align*}
It follows that
\begin{align*}
\sum_{n=1}^{\infty}|\langle T_me_n,e_n\rangle|
&\le \sum_{n=1}^{\infty}\sum_{\varphi\in \mathcal{S}^+}|m(\varphi)||\langle\varphi, e_n\rangle|^2  \\
&=\sum_{\varphi\in \mathcal{S}^+}|m(\varphi)|\sum_{n=1}^{\infty}|\langle\varphi, e_n\rangle|^2  \\
&= \sum_{\varphi\in \mathcal{S}^+}|m(\varphi)|\|\varphi\|^2_{L^{2,\natural}(G)}\,(\mbox{Parseval identity})\\
&\le \|m\|_{\ell^1(\mathcal{S}^+)}.
\end{align*}
Thus, the operator $T_m$ is in the trace class $S_1(L^{2,\natural}(G))$ and its trace class norm satisfies $\|T_m\|_{S_1(L^{2,\natural}(G))}\le \|m\|_{\ell^1(\mathcal{S}^+)}.$
\item Assume that $m$ takes  real values.  Set 
$$m_+(\varphi)=\max\{m(\varphi),0\} \mbox{ and  }m_-(\varphi)=-\min\{m(\varphi),0\}, \, \varphi\in \mathcal{S}^+.$$
Then, $T_m=T_{m_+}-T_{m_-}$. Thus, $T_m$ is  in the trace class $S_1(L^{2,\natural}(G))$ since the latter is a vector space. Moreover, 
\begin{align*}
\|T_m\|_{S_1(L^{2,\natural}(G))}&=\|T_{m_+}-T_{m_-}\|_{S_1(L^{2,\natural}(G))}\\
&\le  \|T_{m_+}\|_{S_1(L^{2,\natural}(G))}+\|T_{m_-}\|_{S_1(L^{2,\natural}(G))}\\
&\le \|m_+\|_{\ell^1(\mathcal{S}^+)} + \|m_-\|_{\ell^1(\mathcal{S}^+)}\\
&\le 2\|m\|_{\ell^1(\mathcal{S}^+)}.
\end{align*}
\item Assume that $m$ takes  complex values. Then,  $m=m_1+im_2$  where $m_1$ and $m_2$ are real-valued sequences indexed by $\mathcal{S}^+$.  Then, 
\begin{align*}
\|T_m\|_{S_1(L^{2,\natural}(G))}&=\|T_{m_1}+iT_{m_2}\|_{S_1(L^{2,\natural}(G))}\\
&\le \|T_{m_1}\|_{S_1(L^{2,\natural}(G))}+  \|T_{m_2}\|_{S_1(L^{2,\natural}(G))}\\
&\le 2\|m_1\|_{\ell^1(\mathcal{S}^+)}  + 2\|m_2\|_{\ell^1(\mathcal{S}^+)}\\
&\le 4\|m\|_{\ell^1(\mathcal{S}^+)}.
\end{align*}
\end{itemize}
\end{proof}

\begin{theorem}\label{LmCompact}
If $m\in \ell^p(\mathcal{S}^+),\, 1\le p\le \infty$, then $T_m : L^{2,\natural}(G)\longrightarrow L^{2,\natural}(G)$ is a compact operator.
\end{theorem}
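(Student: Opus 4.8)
The plan is to realize $T_m$ as an operator-norm limit of finite-rank operators and then invoke the standard fact that the compact operators form a norm-closed subspace of $\mathcal{B}(L^{2,\natural}(G))$. For a finite subset $F\subseteq\mathcal{S}^+$ I would write $m_F$ for the symbol agreeing with $m$ on $F$ and vanishing off $F$. Since $\mathcal{F}(T_mf)=m\widehat f$ and $\mathcal{F}$ is injective on $L^{2,\natural}(G)$, the assignment $m\mapsto T_m$ is linear, so $T_m-T_{m_F}=T_{m-m_F}$, and I can control the difference through the symbol.

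The first step would be to observe that $T_{m_F}$ has finite rank: for every $f\in L^{2,\natural}(G)$ one has $T_{m_F}f=\sum_{\varphi\in F}m(\varphi)\widehat f(\varphi)\,\varphi$, a finite linear combination, so the range of $T_{m_F}$ lies in the finite-dimensional span of $\{\varphi:\varphi\in F\}$; being moreover bounded (Theorem \ref{theorem:mDansLpetLmborne} applied to $m_F\in\ell^p(\mathcal{S}^+)$), $T_{m_F}$ is compact. The second step is the estimate $\|T_m-T_{m_F}\|=\|T_{m-m_F}\|\le\|m-m_F\|_{\ell^p(\mathcal{S}^+)}$, once more by Theorem \ref{theorem:mDansLpetLmborne} applied to $m-m_F\in\ell^p(\mathcal{S}^+)$. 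The third step: enumerating $\mathcal{S}^+=\{\varphi_1,\varphi_2,\dots\}$ and taking $F_N=\{\varphi_1,\dots,\varphi_N\}$, for $1\le p<\infty$ the quantity $\|m-m_{F_N}\|_{\ell^p(\mathcal{S}^+)}^p=\sum_{k>N}|m(\varphi_k)|^p$ tends to $0$ because $m\in\ell^p(\mathcal{S}^+)$; hence $T_{m_{F_N}}\to T_m$ in operator norm, so $T_m$ is compact as a norm limit of (finite-rank, hence) compact operators.

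The step I expect to be the genuine obstacle is the endpoint $p=\infty$: there the truncations $m_{F_N}$ need not converge to $m$ in $\ell^\infty(\mathcal{S}^+)$, and the conclusion fails without an extra assumption, since for $m\equiv 1$ the spherical inversion formula gives $T_m=\mathrm{id}_{L^{2,\natural}(G)}$, which is not compact when $L^{2,\natural}(G)$ is infinite-dimensional. What the argument does deliver is compactness of $T_m$ whenever $m$ vanishes at infinity on the discrete set $\mathcal{S}^+$, i.e. $m\in c_0(\mathcal{S}^+)$; since $\ell^p(\mathcal{S}^+)\subseteq c_0(\mathcal{S}^+)$ for every $p<\infty$, this covers the theorem for $1\le p<\infty$ and clarifies how the statement should be read at $p=\infty$. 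Conceptually, by the Plancherel isometry of Theorem \ref{Plancherel} the operator $T_m$ is unitarily equivalent to multiplication by $m$ on $\ell^2(\mathcal{S}^+)$, and a multiplication operator on $\ell^2$ of a discrete set is compact exactly when its symbol lies in $c_0$, which makes the role of the summability hypothesis transparent.
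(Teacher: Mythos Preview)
Your argument is essentially the paper's: approximate $m$ in $\ell^p(\mathcal{S}^+)$ by finitely supported symbols, use Theorem~\ref{theorem:mDansLpetLmborne} to bound $\|T_m-T_{m_F}\|$ by $\|m-m_F\|_{\ell^p}$, and pass to a norm limit of compact operators. The only cosmetic difference is that you note directly that $T_{m_F}$ has finite rank, whereas the paper observes $m_F\in\ell^1(\mathcal{S}^+)$ and invokes Theorem~\ref{TmTrace-class} to place $T_{m_F}$ in the trace class; your route is slightly more elementary and avoids that detour.

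Your diagnosis of the endpoint $p=\infty$ is correct and in fact sharper than the paper. The paper's proof asserts that the set $\mathcal{D}$ of finitely supported sequences is dense in $\ell^p(\mathcal{S}^+)$ without restricting $p$, which is false for $p=\infty$; and your counterexample $m\equiv 1$, giving $T_m=\mathrm{id}_{L^{2,\natural}(G)}$, shows the conclusion itself fails at $p=\infty$ whenever $\mathcal{S}^+$ is infinite. So the paper's proof carries the same gap you identify, and the theorem should be understood as valid for $1\le p<\infty$ (or more generally for $m\in c_0(\mathcal{S}^+)$), exactly as you say.
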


\begin{proof}
Let $m\in \ell^p(\mathcal{S}^+),\, 1\le p\le \infty$. From Theorem \ref{theorem:mDansLpetLmborne},
 $\|T_m\|\le\|m\|_{\ell^p(\mathcal{S}^+)}.$
 Let $\mathcal{D}$ be the set of  sequences (indexed by $\mathcal{S}^+$) of numbers  which are zero from a certain rank. The set $\mathcal{D}$  is a dense subset of $\ell^p(\mathcal{S}^+)$. Therefore, there exists a sequence $(m_k)\subset \mathcal{D}$ such that $m_k$ tends to $m$ in $\ell^p(\mathcal{S}^+)$ when $k$ goes to $\infty$.
 We have 
 $$\|T_{m_k}-T_m\|\le \|m_k-m\|_{\ell^p(\mathcal{S}^+)}.$$
 Then, $T_{m_k}$ tends to  $T_m$ in $\mathcal{B}(L^{2,\natural}(G))$  as $k$ goes to $\infty$, where $\mathcal{B}(L^{2,\natural}(G))$ is the space of bounded operators on the Hilbert space $L^{2,\natural}(G)$.
 However,  $m_k$ is in $\ell^1(\mathcal{S}^+)$.  
Therefore, from Theorem \ref{TmTrace-class}, $T_{m_k}$ is in the trace class $S_1(L^{2,\natural}(G))$.  Thus,  $T_{m_k}$ is compact.  Therefore, $T_m$ is a compact operator since it is  a limit in $\mathcal{B}(L^{2,\natural}(G))$ of compact operators.
\end{proof}

\begin{theorem}\label{TmDansSp}
If  $m\in \ell^p(\mathcal{S}^+),\,1\le p \le\infty$,  then $T_m : L^{2,\natural}(G)\longrightarrow L^{2,\natural}(G)$ is in the $p$-Schatten-von Neumann class $S_p(L^{2,\natural}(G))$ and 
$$\|T_m\|_{S_p(L^{2,\natural}(G))}\le 4^{\frac{1}{p}}\|m\|_{\ell^p(\mathcal{S}^+)}.$$
\end{theorem}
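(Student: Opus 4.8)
The plan is to obtain the statement by complex interpolation between the two endpoint estimates already at our disposal. At one end, Theorem~\ref{TmTrace-class} tells us that $m\mapsto T_m$ maps $\ell^1(\mathcal{S}^+)$ into the trace class $S_1(L^{2,\natural}(G))$ with $\|T_m\|_{S_1(L^{2,\natural}(G))}\le 4\|m\|_{\ell^1(\mathcal{S}^+)}$. At the other end, the case $p=\infty$ of Theorem~\ref{theorem:mDansLpetLmborne} (equivalently, inserting the crude bound $|m(\varphi)|\le\|m\|_{\ell^\infty(\mathcal{S}^+)}$ into the computation of $\langle T_mf,g\rangle$) shows that $m\mapsto T_m$ maps $\ell^\infty(\mathcal{S}^+)$ into $\mathcal{B}(L^{2,\natural}(G))=S_\infty(L^{2,\natural}(G))$ with $\|T_m\|\le\|m\|_{\ell^\infty(\mathcal{S}^+)}$. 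Both maps are restrictions of one and the same linear map $\Phi\colon m\mapsto T_m$, which is therefore a bounded linear operator from the compatible couple $(\ell^1(\mathcal{S}^+),\ell^\infty(\mathcal{S}^+))$ to the compatible couple $(S_1(L^{2,\natural}(G)),S_\infty(L^{2,\natural}(G)))$, with endpoint norms $M_0\le 4$ and $M_1\le 1$.

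Next I would invoke the interpolation property of Calderón's complex method: for $0<\theta<1$, a linear map that is bounded between the respective endpoints of two Banach couples is bounded from $[A_0,A_1]_\theta$ to $[B_0,B_1]_\theta$ with norm at most $M_0^{1-\theta}M_1^{\theta}$. Taking $\theta=\tfrac1{p'}$, so that $1-\theta=\tfrac1p$, and using Theorem~\ref{InterpolationParticular}, which identifies $[\ell^1(\mathcal{S}^+),\ell^\infty(\mathcal{S}^+)]_{1/p'}=\ell^p(\mathcal{S}^+)$ and $[S_1(L^{2,\natural}(G)),S_\infty(L^{2,\natural}(G))]_{1/p'}=S_p(L^{2,\natural}(G))$, we conclude that $\Phi$ maps $\ell^p(\mathcal{S}^+)$ into $S_p(L^{2,\natural}(G))$ with
\[
\|T_m\|_{S_p(L^{2,\natural}(G))}\le M_0^{1/p}M_1^{1/p'}\le 4^{1/p}\|m\|_{\ell^p(\mathcal{S}^+)},
\]
which is exactly the asserted inequality. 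The boundary cases $p=1$ and $p=\infty$ are already contained in Theorems~\ref{TmTrace-class} and~\ref{theorem:mDansLpetLmborne}, while for $1<p<\infty$ the membership $T_m\in S_p(L^{2,\natural}(G))$ is also consistent with the compactness of $T_m$ established in Theorem~\ref{LmCompact}.

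I expect the only genuinely delicate point to be the bookkeeping around the interpolation step: one must make sure that the hypotheses under which Theorem~\ref{InterpolationParticular} and the interpolation property apply are in force here, namely that $\mathcal{S}^+$ carries the counting measure on a countable set (which holds because $\mathcal{S}^+$ is discrete for compact $G$) and that $L^{2,\natural}(G)$ is a separable Hilbert space, so that $(\ell^1,\ell^\infty)$ and $(S_1,S_\infty)$ are admissible couples for the complex method. Once this is in place the norm estimate is automatic and no further computation is needed; in particular there is no need to redo the singular-value analysis of $T_m$ by hand. An alternative, more self-contained route would be to mimic the three-step decomposition in the proof of Theorem~\ref{TmTrace-class} (reduce to $m\ge 0$, then to $m$ real, then to $m$ complex) and to estimate the singular values of $T_m$ directly from the expansion $\langle T_me_n,e_n\rangle=\sum_{\varphi\in\mathcal{S}^+}m(\varphi)|\langle\varphi,e_n\rangle|^2$, but this is messier and the interpolation argument is both shorter and sharper.
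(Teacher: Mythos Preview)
Your proposal is correct and follows essentially the same route as the paper: it uses the $S_1$ endpoint from Theorem~\ref{TmTrace-class}, the $S_\infty$ endpoint (the paper cites Theorem~\ref{theorem:boundedLinfty} rather than the $p=\infty$ case of Theorem~\ref{theorem:mDansLpetLmborne}, but the two give the same bound), and then interpolates via Theorem~\ref{InterpolationParticular}. Your discussion of the interpolation step is in fact more careful than the paper's, which simply invokes ``interpolation (Theorem~\ref{Riesz-Thorin} and Theorem~\ref{InterpolationParticular})'' without spelling out the Banach-couple formulation.
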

\begin{proof} From Theorem \ref{TmTrace-class},  we have 
$$\|T_m\|_{S_1(L^{2,\natural}(G))}\le 4\|m\|_{\ell^1(\mathcal{S}^+)},\, m\in \ell^1(\mathcal{S}^+).$$
and from Theorem \ref{theorem:boundedLinfty}, using the fact that $S_\infty(L^{2,\natural}(G))=\mathcal{B}(L^{2,\natural}(G))$,   we have 
 $$\|T_m\|_{S_\infty(L^{2,\natural}(G))}\le \|m\|_{\ell^\infty(\mathcal{S}^+)},\, m\in \ell^\infty(\mathcal{S}^+).$$
Then, by interpolation (Theorem \ref{Riesz-Thorin} and Theorem \ref{InterpolationParticular})  we obtain that 
$T_m$ is in $S_p(L^{2,\natural}(G))$ for $1\le p \le\infty$ and 
$$\|T_m\|_{S_p(L^{2,\natural}(G))}\le 4^{\frac{1}{p}}\|m\|_{\ell^p(\mathcal{S}^+)}.$$
 
\end{proof}

\end{document}